\newcommand{\N}{{\mathbb{N}}}          
\newcommand{\R}{{\mathbb{R}}}          
\newcommand{\XIS}{{\mathfrak{X}}}
\newcommand{\SO}{{\mathrm{SO}}}
\newcommand{\rr}{\rightarrow}
\newcommand{\lrr}{\longrightarrow}
\newcommand{\call}{{\cal L}}             %
\newcommand{\calu}{{\cal U}}             %
\newcommand{\caly}{{\cal Y}}             %
\newcommand{\na}{{\nabla}}
\newcommand{\dx}{{\mathrm{d}}}
\newcommand{\inv}[1]{{#1}^{-1}}
\newcommand{\papa}[2]{\frac{\partial#1}{\partial#2}}
\newcommand{\estrela}{{\boldsymbol{\star}}}
\newcommand{\vol}{{\mathrm{vol}}}
\newtheorem{teo}{Theorem}[section]
\newtheorem{lemma}{Lemma}[section]
\newtheorem{coro}{Corollary}[section]
\newtheorem{prop}{Proposition}[section]
\def\cyclic{\mathop{\kern0.9ex{{+}
\kern-2.2ex\raise-.28ex\hbox{\Large\hbox{$\circlearrowright$}}}}\limits}
\title{Minkowski identities for hypersurfaces in constant sectional curvature manifolds}
\author{Rui Albuquerque}
\begin{document}


\maketitle


\begin{abstract}

We give a new proof of the generalized Minkowski identities relating the higher degree mean curvatures of orientable closed hypersurfaces immersed in a given constant sectional curvature manifold. Our methods rely on a \textit{fundamental differential system} of Riemannian geometry introduced by the author. We develop the notion of position vector field, which lies at the core of the Minkowski identities.

\end{abstract}


\ 
\vspace*{3mm}\\
{\bf Key Words:} exterior differential system, hypersurface, mean curvatures, integral identities.
\vspace*{2mm}\\
{\bf MSC 2010:} Primary: 53A07, 53C21, 53C65; Secondary: 53C17, 53C42, 58A15

\vspace*{17mm}

\markright{\sl\hfill  R. Albuquerque \hfill}

\setcounter{section}{1}

\begin{center}
 \textbf{1. Introduction}
\end{center}
\setcounter{section}{1}

The celebrated integral identities of Minkowski type (\cite{ChenYano,Hsiung1,Hsiung2,Katsurada}) read as follows.
Let $N$ be a closed orientable immersed hypersurface of Euclidean space and let $H_i$ denote the $i$th mean curvatures of $N$. Let ${P}$ be the position vector field from the origin. Then, for any $0\leq i\leq n-1$, we have
\begin{equation} \label{HMIzero}
 \int_N(H_i-\langle {P},\vec{n}\rangle H_{i+1})\,\vol_N=0.
\end{equation}
These identities were found by Hsiung and generalize the result of Minkowski for $i=0$.

The assertion that \eqref{HMIzero} is easy to prove using the exterior differential system of the ``$\theta,\alpha_0,\ldots,\alpha_n$'' is a challenge proposed to the reader of \cite{Alb2012}. The theory of what we regard as a fundamental differential system of Riemannian geometry is introduced in \cite{Alb2012}. We give here the solution to the challenge, in Theorem \ref{teo_HMI}, and continue to further develop the applications of the differential system.

In the first references above, the notion of the position vector field $P$ is quite obscure and the attempted generalization of the Minkowski identities to constant sectional curvature ambient $M$ yields a different result from what is found today in \cite{GuanLi} and \cite{GuidiMartino}. The latter recently discovered `Hsiung-Minkowski' identities are proved again in the present article. They appear either as a new formula or as a most important remark to add to Hsiung's assertions on space forms. The question surely depends on the notion of $P$. We propose here a definition and show that every warped product metric admits a position vector field.

We also prove in a global and invariant theory the generalized Minkowski identities for Killing vector fields, found by Katsurada in \cite{Katsurada}. These are vanishing theorems which are not so well-known today, perhaps due to the same difficulties pointed above. 

Our framework is that of the tangent sphere bundle of $M$, where the differential system lives. Though proves not enough to contain the identities known for any vector fields, rather than just the Killing. Killing vector fields do lift as extended vector fields to the sphere bundle. The theory must hence be extended to the whole Riemannian \textit{phase space} and its $\SO(n+1)$-structure.

We acknowledge the fruitful conversations with Vittorio Martino (U. Bologna) and Juan Sancho de Salas (U. Extremadura) regarding the clarification of some results. Also we thank the careful reading and remarks of the anonymous Referee of Differential Geometry and Applications.

Let us start by resuming with some definitions and notations from \cite{Alb2012}.

Throughout the text we let $(M,g)$ denote a smooth oriented Riemannian $(n+1)$-dimensional manifold. Quite often we use the notation $\langle\ ,\ \rangle$ for the metric.

We consider the unit tangent sphere bundle $\pi:SM\lrr M$ with its natural metric and $\SO(n)$ structure. This theory is surveyed in \cite{Alb2012,Alb2015a,Alb2019}. We denote by $e_0,e_1,\ldots,e_n,e_{n+1},\ldots,e_{2n}$ an adapted frame on $SM$, meaning the first $n+1$ vectors are horizontal and the remaining vectors are vertical, each one  a \textit{mirror} of the respective $e_1,\ldots,e_n$. Let us recall $\theta=e^0$ is the canonical contact 1-form due to Sasaki. $e_0$ is the restriction of the geodesic flow vector field $S$ to $SM$.

We have two useful ways of defining the $\alpha_i$ for each $i=0,\ldots,n$. Namely, letting $n_i=\frac{1}{i!(n-i)!}$ and $\alpha_n=e^{n+1}\wedge\cdots\wedge e^{n+n}$ be the volume-form of the fibers, we have
\begin{equation}
 \begin{split}
  \alpha_i  &= n_i\,\alpha_n\circ(B^{n-i}\wedge1^i)\\
  &  = n_i\sum_{\sigma\in S_n}\mathrm{sg}(\sigma)\,e^{\sigma_1}\wedge\cdots\wedge
e^{\sigma_{n-i}}\wedge e^{(n+\sigma_{n-i+1})}\wedge\cdots\wedge e^{(n+\sigma_n)} .
 \end{split}
\end{equation}
We also define $\alpha_{-1}=\alpha_{n+1}=0$. The mirror endomorphism $B$ of $TTM$ is well-defined, as it sends the horizontal lifts to the respective vertical and sends verticals to 0. The $\circ$ denotes an alternating operator.

We need to recall the canonical vertical vector field $\xi\in\XIS_{TM}$, defined by $\xi_u=u\in TTM$, $\forall u\in TM$ and the geodesic flow vector field on $TM$, the horizontal $S=B^\mathrm{t}\xi$.

Given an orientable hypersurface $f:N\hookrightarrow M$, let us recall the second fundamental form $A=\na\vec{n}$, where $\vec{n}$ is the unit-normal to $N$ with the induced orientation. Then the $i$th-mean curvature $H_i$ is defined by $\binom{n}{i}H_i$ being the elementary symmetric polynomial of degree $i$ on the eigenvalues $\lambda_1,\ldots,\lambda_n$ of $A$, the so-called principal curvatures of $N$. In other words, $\binom{n}{i}H_i=\sum_{1\leq j_1<\cdots<j_i\leq n}\lambda_{j_1}\cdots\lambda_{j_i}$. One also defines $H_0=1$.

On $N$ we have the canonical lift $\hat{f}:N\hookrightarrow SM$ of $f$ to $SM$, defined by $\hat{f}(x)=\vec{n}_{f(x)}$, and hence the formulas
\begin{equation} \label{fundamentalformulapullback}
 \hat{f}^*\theta=0
\end{equation}
and
\begin{equation}  \label{fundamentalformulapullba}
 \hat{f}^*{\alpha_i}=\binom{n}{i}H_i\,\vol_N .
\end{equation}

Next we let $M=\R^{n+1}$. Then the structural equations on the unit tangent sphere bundle of $M$ read, $\forall 0\leq i\leq n$,
\begin{equation}\label{derivadasalpha_i_flat}
\dx\alpha_i=(i+1)\,\theta\wedge\alpha_{i+1} .
\end{equation}

We stress the above alternating operator $\circ$ and mirror map $B$, the fundamental formulas \eqref{fundamentalformulapullback} and \eqref{fundamentalformulapullba} and this last structure equation \eqref{derivadasalpha_i_flat} are described, respectively, in Section 2, in Proposition 3.2 and in Example 3 of \cite{Alb2012}.
\begin{teo}[Hsiung-Minkowski identities] \label{teo_HMI}
Let $f:N\rr\R^{n+1}$ be a closed orientable immersed $C^2$ hypersurface of Euclidean space. Let ${P}$ be the position vector field. Then, for any $0\leq i\leq n-1$, we have
\begin{equation}
 \int_N(H_i-\langle {P},\vec{n}\rangle H_{i+1})\,\vol_N=0.
\end{equation}
\end{teo}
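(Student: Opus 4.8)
The plan is to exhibit the integrand $(H_i-\langle P,\vec n\rangle H_{i+1})\,\vol_N$ as an exact $n$-form on $N$ and then conclude by Stokes' theorem, $N$ being closed. All of the work is done upstairs on the flat unit sphere bundle $S\R^{n+1}$, where the differential system $(\theta,\alpha_0,\ldots,\alpha_n)$ lives, and is brought down to $N$ through the canonical lift $\hat f(x)=\vec n_{f(x)}$ by means of the fundamental pullback formulas \eqref{fundamentalformulapullback}, \eqref{fundamentalformulapullba} and the flat structure equations \eqref{derivadasalpha_i_flat}.

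The form to be differentiated is $\iota_Z\alpha_i$, the contraction of the $n$-form $\alpha_i$ with $Z:=P^h$, the horizontal lift to $S\R^{n+1}$ of the Euclidean position vector field $P$. Two properties of $Z$ are needed. First, $\theta(Z)$ is the function $u\mapsto\langle P_{\pi(u)},u\rangle$ on $S\R^{n+1}$, directly from Sasaki's definition of the contact form $\theta$; hence $\hat f^*\bigl(\theta(Z)\bigr)=\langle P,\vec n\rangle$. Second, and this is the crux, the Lie-derivative identity $\mathcal L_Z\alpha_i=(n-i)\,\alpha_i$ for $0\le i\le n$. It reflects the homothety $\na P=\mathrm{Id}$ of the position field: in the natural flat coordinates on $S\R^{n+1}=\R^{n+1}\times S^n$ the flow of $Z$ is the dilation $(p,u)\mapsto(e^t p,u)$, under which the horizontal coframe members are homogeneous of degree one and the vertical ones are invariant; since $\alpha_i$ is by construction a combination of monomials each carrying $n-i$ horizontal and $i$ vertical covectors, it is homogeneous of degree $n-i$, that is $\mathcal L_Z\alpha_i=(n-i)\alpha_i$ (and likewise $\mathcal L_Z\theta=\theta$). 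Proving this Lie-derivative identity cleanly is the one real difficulty; everything else is bookkeeping.

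With these in hand, Cartan's formula and \eqref{derivadasalpha_i_flat} give
\[
 d(\iota_Z\alpha_i)=\mathcal L_Z\alpha_i-\iota_Z\,d\alpha_i=(n-i)\,\alpha_i-(i+1)\,\theta(Z)\,\alpha_{i+1}+(i+1)\,\theta\wedge\iota_Z\alpha_{i+1}.
\]
Pulling back by $\hat f$: the last summand vanishes by \eqref{fundamentalformulapullback}, while \eqref{fundamentalformulapullba} turns $\alpha_i$ and $\alpha_{i+1}$ into $\binom ni H_i\,\vol_N$ and $\binom n{i+1}H_{i+1}\,\vol_N$ and $\theta(Z)$ into $\langle P,\vec n\rangle$, so that
\[
 d\bigl(\hat f^*\iota_Z\alpha_i\bigr)=\Bigl((n-i)\tbinom ni\,H_i-(i+1)\tbinom n{i+1}\langle P,\vec n\rangle\,H_{i+1}\Bigr)\vol_N .
\]
Since $(n-i)\binom ni=(i+1)\binom n{i+1}$, which is nonzero for $0\le i\le n-1$, the right-hand side equals $(i+1)\binom n{i+1}\bigl(H_i-\langle P,\vec n\rangle H_{i+1}\bigr)\vol_N$; integrating the exact $n$-form $d\bigl(\hat f^*\iota_Z\alpha_i\bigr)$ over the closed manifold $N$ gives zero by Stokes' theorem, and dividing by the constant $(i+1)\binom n{i+1}$ yields the stated identity. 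For a $C^\infty$ immersion this computation is literal; the asserted $C^2$ case then follows because both sides of the identity depend continuously on $f$ in the $C^2$ topology on the fixed closed $N$, in which $C^\infty$ immersions are dense.
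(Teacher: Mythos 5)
Your proposal is correct and follows essentially the same route as the paper: the identity $\mathcal{L}_{P}\alpha_i=(n-i)\alpha_i$ for the horizontal lift of the position field, Cartan's formula combined with the flat structure equation \eqref{derivadasalpha_i_flat}, pullback by $\hat f$ using \eqref{fundamentalformulapullback}--\eqref{fundamentalformulapullba}, Stokes, and the binomial identity $(n-i)\binom{n}{i}=(i+1)\binom{n}{i+1}$. The only (cosmetic) difference is that you obtain the Lie-derivative identity by homogeneity of the adapted coframe under the dilation flow $\psi_t(x,u)=(e^tx,u)$, whereas the paper derives it from $\mathcal{L}_{P}B=B$, $\mathcal{L}_{P}\alpha_n=0$ and the Leibniz rule for the alternating composition defining the $\alpha_i$.
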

\begin{proof}
Let $(x,u)$ denote the natural coordinates of $S\R^{n+1}=\R^{n+1}\times S^n\subset T\R^{n+1}$. The position vector field $P$, defined by $P_x=x$, is lifted to a horizontal vector field $P_{(x,u)}=(x,0)$, clearly tangent to the sphere bundle. Since the 1-parameter subgroup of diffeomorphisms induced by $P$ on the base is given by $\phi_t(x)=e^tx$, the one induced on the total space of the bundle is given by
\[  \psi_t(x,u)=(e^tx,u) .  \]
Trivially we have that $B_{(x,u)}(\partial_{x^j})=\partial_{u^j}$ and $B_{(x,u)}(\partial_{u^j})=0$. Two easy computations with the two types of coordinate vector fields yield ${\psi_t}^*B=e^tB$. Hence the Lie derivative satisfies
\[  \call_{{P}}B=\left.\frac{\dx}{\dx t}\right\vert_0{\psi_t}^*B=B. \]
Now, since $\alpha_n=u^0\dx u^{1\cdots n}-u^1\dx u^{02\cdots n}+\cdots$ is independent of $x$, we have $\call_{P}\alpha_n=0$. By definition of the $\alpha_i$ and the basic technique with differential forms introduced in \cite[Section 4.1]{Alb2012}, on one hand we have a Leibniz rule
\begin{equation*}
\begin{split}
   \call_{P}\alpha_i& =n_i\,(\call_{P}\alpha_n)\circ
   (B^{n-i}\wedge1^i)+  n_i\,\alpha_n\circ
   (\call_{P}B\wedge B\wedge\cdots\wedge B\wedge1^i)+\cdots   \\ &   \hspace{45mm} \cdots +  n_i\,\alpha_n\circ
  (B\wedge\cdots\wedge B\wedge\call_{P}B\wedge1^i) \: 
  =\:(n-i)\,\alpha_i .
\end{split}
\end{equation*}
On the other hand, by Cartan's formula and \eqref{derivadasalpha_i_flat}, we find
\begin{align*}
   \call_{P}\alpha_i &= \dx(P\lrcorner\alpha_i)+P\lrcorner\dx\alpha_i \\    &= \dx(P\lrcorner\alpha_i)+ (i+1)\bigl(\theta(P)\,\alpha_{i+1}-\theta\wedge P\lrcorner\alpha_{i+1}\bigr) .
\end{align*}

Finally, let us bring $N$ into context. We have $\hat{f}(x)=(f(x),\vec{n}_{f(x)})$ and the formulas $\hat{f}^*\theta=0$ and $\hat{f}^*(\theta(P))_x=\langle P,S\rangle_{\hat{f}(x)}=\langle P,\vec{n}\rangle_{f(x)}$. By Stokes theorem, the two sides of the equations above yield
\[  (n-i)\int_N\binom{n}{i}H_i\,\vol_N = (i+1)\int_N\langle P,\vec{n}\rangle\binom{n}{i+1}H_{i+1}\,\vol_N ,  \]
redundant in case $i=n$. Finally we note that $(i+1)\binom{n}{i+1}=(n-i)\binom{n}{i}$.
\end{proof}


Case $i=0$ is due to Minkowski. The higher order formula is due to Hsiung, cf. \cite{Hsiung1}.

The same and several other Hsiung-Minkowski type identities for any closed submanifolds of Euclidean space are deduced in \cite{ChenYano,Katsurada,Rei}.

\vspace{2mm}
\begin{center}
 \textbf{2. Some general identities}
\end{center}
\setcounter{section}{2}

In the above theorem, if $N$ is a $C^2$ submanifold \textit{with} boundary, then a slightly more general identity of Hsiung-Minkowski is immediately found. Furthermore, the integral $\int_N\dx(X\lrcorner\alpha_i)$ appears often in similar settings below, so we continue to assume throughout that $N$ is closed.

The following lemma, besides its formulation, relies entirely on multilinear algebra; it does not owe to the differentiable structure of $\pi:SM\rr M$.
\begin{lemma}\label{formulacompostadealpha_nmenosumcomB}
The following identity is always satisfied:
 \begin{equation}
  \alpha_{n-1}\circ(B^{n-i}\wedge1^i)=i!(n-i+1)!\,\alpha_{i-1}.
 \end{equation}
\end{lemma}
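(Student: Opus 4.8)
The plan is to prove the identity purely by manipulating the alternating-composition operator $\circ$, with no reference to the geometry of $SM$. Recall that $\alpha_{n-1}=n_{n-1}\,\alpha_n\circ(B\wedge 1^{n-1})$ with $n_{n-1}=\frac{1}{(n-1)!\,1!}$, so that
\begin{equation*}
\alpha_{n-1}\circ(B^{n-i}\wedge 1^i)=\frac{1}{(n-1)!}\,\bigl(\alpha_n\circ(B\wedge 1^{n-1})\bigr)\circ(B^{n-i}\wedge 1^i).
\end{equation*}
First I would establish an associativity-type rule for $\circ$: composing $\alpha_n\circ(B\wedge 1^{n-1})$ on the outside with $B^{n-i}\wedge 1^i$ should be the same as composing $\alpha_n$ directly with the appropriate product of $B$'s and $1$'s, obtained by ``pushing'' the inner $B$-slots through the outer ones. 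The $1$-slot at the front of $B\wedge 1^{n-1}$ can receive either a $B$ (giving $B$) or a $1$ (giving $1$) from the inner operator, while the $B$-slot at the front is already committed. Tracking the combinatorics, $\alpha_n\circ(B\wedge 1^{n-1})\circ(B^{n-i}\wedge 1^i)$ expands as a sum of terms $\alpha_n\circ(B^{n-i+1}\wedge 1^{i-1})$, each arising from a choice of which positions of the inner $1^i$-block land on the outer $B$-slot versus the outer $1$-slots.

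The key step is the bookkeeping of multiplicities. By the skew-symmetry of $\alpha_n$ and of the wedge, all terms $\alpha_n\circ(B^{n-i+1}\wedge 1^{i-1})$ are equal up to sign, and with the correct orientation conventions they all carry the same sign, so the whole expression collapses to a single integer multiple of $\alpha_n\circ(B^{n-i+1}\wedge 1^{i-1})$. That multiple is a count of the ways the $n-i$ inner $B$'s and $i$ inner $1$'s distribute over the one outer $B$-slot and the $n-1$ outer $1$-slots subject to the constraint that exactly one inner slot hits the outer $B$: one obtains a binomial/multinomial factor which, after inserting $n_{n-1}$ and comparing with the normalization $\alpha_{i-1}=n_{i-1}\,\alpha_n\circ(B^{n-i+1}\wedge 1^{i-1})$, $n_{i-1}=\frac{1}{(i-1)!\,(n-i+1)!}$, must yield exactly the stated coefficient $i!(n-i+1)!$. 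I expect to verify the coefficient by a direct ratio computation: the claim is equivalent to $n_{n-1}\cdot(\text{multiplicity})=i!(n-i+1)!\cdot n_{i-1}$, i.e. the multiplicity equals $i!(n-i+1)!\cdot\frac{(n-1)!}{(i-1)!(n-i+1)!}=i\cdot(n-1)!\,/\,(i-1)!\cdot\dots$, which I would pin down cleanly rather than estimate.

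The main obstacle will be making the ``associativity'' of $\circ$ precise enough to count multiplicities without sign errors — in particular, keeping straight that an inner $B$ composed onto an outer $1$ yields $B$, an inner $1$ onto an outer $1$ yields $1$, an inner $1$ onto the outer $B$ yields $B$, and an inner $B$ onto the outer $B$ yields $B^2$, which must vanish since $B$ maps verticals to $0$ (so the term where an inner $B$ meets the outer $B$ drops out, forcing exactly one of the inner $1$'s onto the outer $B$-slot). This vanishing $B^2=0$ is exactly what produces the shift from $n-i$ to $n-i+1$ in the exponent. Once the surviving terms are identified and seen to be mutually equal, the coefficient is a pure counting argument, and the lemma follows by substituting the definitions of $\alpha_{n-1}$ and $\alpha_{i-1}$ and simplifying the factorials. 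I would present this as: reduce to a statement about $\alpha_n\circ(\cdots)$, invoke $B^2=0$ to cut the sum down, count the equal surviving terms, and match constants.
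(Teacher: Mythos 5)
Your plan is essentially the paper's own proof: the paper likewise reduces everything to expansions of $\alpha_n\circ(\cdots)$ over permutations, kills the offending terms through the same vanishing $B^2=0$ (in the form $e^j\circ B=e^{n+j}\circ B^2=0$, together with $e^j=e^{n+j}\circ B$), and fixes the constant by counting the equal surviving terms. The multiplicity you left pending does come out to $i\,(n-1)!$ as your ratio requires — per slot-assignment there are $(n-i+1)\,(n-1)!\,(n-i)!\,i!$ surviving pairs of permutations, and dividing by the $(n-i+1)!\,(i-1)!$ repetitions inside $\alpha_n\circ(B^{n-i+1}\wedge 1^{i-1})$ gives exactly $i\,(n-1)!$ — so your argument closes and yields $i!\,(n-i+1)!\,\alpha_{i-1}$ just as in the paper.
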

\begin{proof}
Letting $B_1=\cdots=B_{n-i}=B$ and $B_{n-i+1}=\cdots=B_{n}=1$, we have an easy identity, $\alpha_i=n_i\,\alpha_n\circ(B^{n-i}\wedge1^{i})=n_i\sum_{\sigma\in S_n} 
e^{n+1}\circ B_{\sigma_1}\wedge\cdots\wedge e^{2n}\circ B_{\sigma_n}$, which follows immediately from definitions, cf. \cite[Section 4.3]{Alb2012}. In particular, we have that $\alpha_{n-1}=e^{1(n+2)(n+3)\cdots(2n)}+e^{(n+1)2(n+3)\cdots(2n)}+\cdots+e^{(n+1)(n+2)\cdots(2n-1)n}$. Here we notice the identity we wish to prove is true for $i=0$, because $e^j\circ B=0$, $\forall 0\leq j\leq n$, and $\alpha_{-1}=0$. We then proceed with the case $i>0$. First,
\begin{equation*}
 \begin{split}
  e^{(n+1)\cdots(n+j-1)j(n+j+1)\cdots(2n)}\circ (B^{n-i}\wedge1^i) =\qquad\qquad \\
 = \sum_{\sigma\in S_n} 
e^{n+1}\circ B_{\sigma_1}\wedge\cdots\wedge e^{n+j-1}\circ B_{\sigma_{j-1}}\wedge e^j\circ B_{\sigma_j} \wedge e^{n+j+1}\circ B_{\sigma_{j+1}} \wedge\cdots\wedge e^{2n}\circ B_{\sigma_n}  \\
 = \sum_{k=n-i+1}^n \sum_{\sigma\in S_n:\ \sigma_j=k} e^{n+1}\circ B_{\sigma_1}\wedge\cdots\wedge e^{n+j-1}\circ B_{\sigma_{j-1}}\wedge e^j\wedge e^{n+j+1}\circ B_{\sigma_{j+1}} \wedge\cdots\wedge e^{2n}\circ B_{\sigma_n}  \\
 = \sum_{k=n-i+1}^n \sum_{\sigma:\ \sigma_j=k} e^{n+1}\circ B_{\sigma_1}\wedge\cdots\wedge e^{n+j-1}\circ B_{\sigma_{j-1}}\wedge e^{n+j}\circ B\wedge e^{n+j+1}\circ B_{\sigma_{j+1}} \wedge\cdots\wedge e^{2n}\circ B_{\sigma_n} 
 \end{split}
\end{equation*}
since $e^j=e^{n+j}\circ B$. The sum is then equal to the following, where $B_1=\cdots=B_{n-i}=B$ and $B_{n-i+1}=\cdots\widehat{B_k}\cdots=B_{n}=1$, $B_k=B$,
\begin{equation*}
 \begin{split}
  = \sum_{k=n-i+1}^n \sum_{\sigma:\ \sigma_j=k} e^{n+1}\circ B_{\sigma_1}\wedge\cdots\wedge e^{n+j-1}\circ B_{\sigma_{j-1}}\wedge e^{n+j}\circ B_{\sigma_j}\wedge e^{n+j+1}\circ B_{\sigma_{j+1}} \wedge\cdots\wedge e^{2n}\circ B_{\sigma_n} .
 \end{split}
\end{equation*}
Then we may assume $B_1=\cdots=B_{n-i+1}=B$ and $B_{n-i+2}=\cdots=B_{n}=1$, so that the sum becomes
\begin{equation*}
 =i\sum_{\sigma:\ \sigma_j=n-i+1} e^{n+1}\circ B_{\sigma_1}\wedge\cdots\wedge e^{n+j-1}\circ B_{\sigma_{j-1}}\wedge e^{n+j}\circ B_{\sigma_j}\wedge e^{n+j+1}\circ B_{\sigma_{j+1}} \wedge\cdots\wedge e^{2n}\circ B_{\sigma_n} .
\end{equation*}
Returning to $\alpha_{n-1}=\sum_{j=1}^ne^{(n+1)\cdots(n+j-1)j(n+j+1)\cdots(2n)}$, we find
\begin{equation*}
 \begin{split}
  \alpha_{n-1}\circ(B^{n-i}\wedge1^i) &=i\sum_{j=1}^n\sum_{\sigma\in S_n:\ \sigma_j=n-i+1} e^{n+1}\circ B_{\sigma_1}\wedge\cdots\wedge e^{2n}\circ B_{\sigma_n} \\
  & =i(i-1)!(n-(i-1))!\alpha_{i-1} \\
  &=i!(n-i+1)! \alpha_{i-1},
 \end{split}
\end{equation*}
 the desired formula.
\end{proof}

In the following, $\na^*$ denotes the linear connection respecting the canonical splitting of $TTM\simeq\pi^*TM\oplus\pi^\estrela TM$, and therefore respecting the Sasaki metric. It is a \textit{double} pull-back to the tangent manifold of the Levi-Civita connection $\na$ of $M$, cf. \cite{Alb2012,Alb2019}. 

Recall the horizontal distribution is given by $\ker\na^*_\cdot\xi$ and that $\na^*_Y\xi=Y^v$ and, moreover, that $B$ is parallel for $\na^*$. The connection $\na^*$ is reducible to $\SO(n+1)$. Notice it does not comply with the further $\SO(n)$-reduction to the structure group of the submanifolds $TM\backslash0$ or $SM$.

\begin{prop}\label{Prop_derivadadeLiedeB}
 For any vector field $X$ over an open subset $\inv{\pi}(\calu)\subset TM$, for open $\calu\subset M$, and any $Y\in TTM$, we have
 \begin{equation}
  (\call_{X}B)Y=B\na^*_YX-\na^*_{BY}X.
 \end{equation}
 Moreover, $\call_{X^h}B=\lambda B$ for a horizontal vector field $X^h$ and some real function $\lambda$ on $\inv{\pi}(\calu)$ if and only if $\lambda$ is constant along the fibers and $X^h$ is the horizontal lift of a vector field $X$ on $\calu$ such that $\na_YX=\lambda Y$.
 \end{prop}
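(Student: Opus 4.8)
The plan is to deduce the first formula from the standard expression for the Lie derivative of a $(1,1)$-tensor together with the properties of $\na^*$ recalled above, and then to extract the characterization by testing that formula on vertical and on horizontal vectors in turn. For the first identity, I would start from $(\call_X B)Y=[X,BY]-B[X,Y]$, valid for vector fields $X,Y$; since the asserted right-hand side is $C^\infty$-linear in $Y$, it suffices to check the identity on an adapted frame and then extend. Writing the brackets through the connection, $[U,V]=\na^*_UV-\na^*_VU-T^*(U,V)$ with $T^*$ the torsion of $\na^*$, and using that $B$ is $\na^*$-parallel (so $\na^*_X(BY)=B\na^*_XY$), the expansion of $[X,BY]-B[X,Y]$ collapses, after cancelling the common term $B\na^*_XY$, to $B\na^*_YX-\na^*_{BY}X-T^*(X,BY)+B\,T^*(X,Y)$. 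The one point deserving care is that $\na^*$ is \emph{not} torsion-free; however, being the double pull-back of the Levi-Civita connection and respecting the horizontal-vertical splitting of $TTM$, its torsion is vertical-valued and vanishes as soon as one of its two arguments is vertical (immediate from the standard bracket formulas for horizontal and vertical lifts, cf. \cite{Alb2012,Alb2019}). Since $BY$ is always vertical, $T^*(X,BY)=0$; since $T^*(X,Y)$ is vertical and $B$ annihilates verticals, $B\,T^*(X,Y)=0$; and the formula follows.

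For the ``if'' part of the converse, suppose $X^h$ is the horizontal lift of a vector field $X$ on $\calu$ with $\na_YX=\lambda Y$, where $\lambda$ is pulled back from $\calu$. Since $\na^*$ restricts on the horizontal distribution to the pull-back of $\na$, one has $\na^*_YX^h=(\na_{\pi_*Y}X)^h=\lambda\,(\pi_*Y)^h$, i.e. $\lambda$ times the horizontal part of $Y$; hence $\na^*_{BY}X^h=0$ because $BY$ is vertical, while $B\na^*_YX^h=\lambda\,B\bigl((\pi_*Y)^h\bigr)=\lambda BY$. Substituting into the identity of the first part yields $(\call_{X^h}B)Y=\lambda BY$, as required.

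For the reverse implication, assume $\call_{X^h}B=\lambda B$ for a horizontal vector field $X^h$ and a function $\lambda$ on $\inv{\pi}(\calu)$. Evaluating the first-part formula on a vertical $Y$ kills the term $\na^*_{BY}X^h$, so $0=\lambda BY=(\call_{X^h}B)Y=B\na^*_YX^h$; as $\na^*_YX^h$ is horizontal and $B$ is injective on horizontal vectors, $\na^*_YX^h=0$ for every vertical $Y$. In a local frame pulled back from $M$ this means the components of $X^h$ are fibrewise constant, so $X^h$ is the horizontal lift of a vector field $X$ on $\calu$. Evaluating now on $Y=W^h$, the term $\na^*_{BW^h}X^h=\na^*_{W^v}X^h$ again vanishes, leaving $(\na_WX)^v=B\na^*_{W^h}X^h=(\call_{X^h}B)W^h=\lambda BW^h=\lambda W^v$; applying the vertical-lift isomorphism pointwise, $(\na_WX)_{\pi(u)}=\lambda(u)\,W_{\pi(u)}$ for all $u$ and all $W$. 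Choosing $W$ nonvanishing at a prescribed base point forces $\lambda(u)$ to depend only on $\pi(u)$, so $\lambda$ is constant along the fibres; and then $\na_WX=\lambda W$ for every $W$, i.e. $\na_YX=\lambda Y$.

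The main obstacle, I expect, is nothing deep but rather these two bookkeeping issues: in the first part, carrying the non-zero torsion of $\na^*$ through the computation and checking that it contributes nothing because it is vertical-valued and meets $B$ only where $B$ annihilates it; and in the converse, the two descent steps --- for $X^h$ and for $\lambda$ --- which rest on the pull-back connection being trivial along the (connected) fibres and on the vertical lift being a fibrewise linear isomorphism.
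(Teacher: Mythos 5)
Your proof is correct and follows essentially the same route as the paper: expand $(\call_XB)Y=[X,BY]-B[X,Y]$ via $\na^*$, kill the torsion terms because the torsion $\pi^\estrela R(\,\cdot\,,\cdot\,)\xi$ is vertical-valued and vanishes on vertical arguments while $B$ annihilates verticals, and then read off the converse from the horizontal and vertical parts of the resulting formula. The only difference is cosmetic: you spell out the easy ``if'' direction and the fibrewise constancy of $\lambda$, which the paper leaves implicit.
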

\begin{proof}
The torsion of the connection $\na^*$ has vertical part only; namely, the vertical lift $\pi^\estrela R(\ ,\ )\xi$ of the curvature of $M$ applied to $\xi$. Since both tensors $\pi^\estrela R(\ ,\ )\xi$ and $B$ vanish when applied on vertical directions, we find
 \begin{align*}
  (\call_{X}B)Y & =\call_{X}BY-B\call_{X}Y \\
  &=[X,BY]-B[X,Y]  \\
  &=\na^*_{X}BY-\na^*_{BY}X-B(\na^*_{X}Y-\na^*_{Y}X)   \\
  &= B\na^*_{Y}X-\na^*_{BY}X .
 \end{align*}
We remark for $X$ vertical this is just $-\na^*_{BY}X$. Now, if $X=X^h$ and $\call_{X^h}B=\lambda B$, then the horizontal part of the formula vanishes, $\na^*_{BY}X^h=0$, and so $X^h$ does not vary along the fibers. Thus $X^h=\pi^*X$ is a lift of $X\in\XIS_M$. Henceforth satisfying the identity $\na_YX=\lambda Y$ on $\calu$.
\end{proof}

The case where $X$ is the horizontal lift of the position vector field $P_x=x$ on the manifold $\R^{n+1}$, hence lifted as $P_x=(x,0)$, is quite interesting. Clearly, $\na^*_Y{P}=Y^h$, for any $Y\in TTM$. So the new formula yields immediately $\call_{P}B=B$, precisely as deduced for Theorem \ref{teo_HMI}. 

We give the name of \textit{$\lambda$-mirror} to the vector fields on $TM$ satisfying $\call_{X}B=\lambda B$, for some scalar $\lambda$. For constant $\lambda$, they are all found as $\lambda X_1$, with $X_1$ a particular solution of the 1-mirror equation, plus the Lie algebra of 0-mirror vector fields. Indeed a Lie algebra.

The previous statements are verified mutatis mutandis with the notion of \textit{$\lambda$-adjoint-mirror} vector field on $TM$, ie. those $X$ such that $\call_{X}B^\mathrm{t}=\lambda B^\mathrm{t}$.

Other details on mirror vector fields can be found in \cite{Alb2019}.

\vspace{2mm}
\begin{center}
 \textbf{3. On constant sectional curvature}
\end{center}
\setcounter{section}{3}

Let us assume that $M$ has constant sectional curvature $c$. From \cite[Example 3, Section 2.4]{Alb2012}, cf. \eqref{derivadasalpha_i_flat}, we have the \textit{magic} formula
\begin{equation}\label{derivadasalpha_iemCSecC}
 \dx\alpha_i=\theta\wedge\bigl((i+1)\alpha_{i+1}-c(n-i+1)\alpha_{i-1}\bigr) .
\end{equation}

Before considering a particular vector field arising from $M$ we may study the geodesic spray $S=\theta^\sharp=B^\mathrm{t}\xi$. We find the following results.
\begin{prop}
Let $M$ have constant sectional curvature $c$. Then:
 \begin{enumerate}[label=(\roman*)]
  \item \:$\call_SB=1-2B^\mathrm{t}B$
  \item \:$\call_S\alpha_i=(i+1)\,\alpha_{i+1}-c(n-i+1)\,\alpha_{i-1}$.
 \end{enumerate}
\end{prop}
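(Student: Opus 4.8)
The plan is to compute $\call_S B$ directly from Proposition \ref{Prop_derivadadeLiedeB} applied to $X=S$, then to feed this into a Leibniz-rule argument for the $\alpha_i$, exactly parallel to the proof of Theorem \ref{teo_HMI}, and finally to cross-check against Cartan's formula and the magic identity \eqref{derivadasalpha_iemCSecC} if desired.

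For part (i), I would start from $(\call_S B)Y = B\na^*_Y S - \na^*_{BY}S$. Here $S=B^\mathrm{t}\xi$, so I need $\na^*_Y S = \na^*_Y(B^\mathrm{t}\xi) = B^\mathrm{t}\na^*_Y\xi = B^\mathrm{t}Y^v$, using that $B^\mathrm{t}$ is $\na^*$-parallel (as $B$ is) and that $\na^*_Y\xi = Y^v$. Thus $B\na^*_Y S = BB^\mathrm{t}Y^v$; but $Y^v$ is vertical and $B$ kills no vertical vector in general — rather $BB^\mathrm{t}$ acts as the identity on vertical vectors in the sense dual to $B^\mathrm{t}B$ on horizontal ones — so I must be careful: writing $Y = Y^h + Y^v$ and using $B(\text{horizontal})=\text{mirror vertical}$, $B(\text{vertical})=0$, $B^\mathrm{t}(\text{vertical})=\text{mirror horizontal}$, $B^\mathrm{t}(\text{horizontal})=0$, I get $\na^*_Y S = B^\mathrm{t}Y^v$ is the horizontal mirror of the vertical part, hence $B\na^*_Y S = B B^\mathrm{t} Y^v = Y^v$ (the vertical projection of $Y$), because $BB^\mathrm{t}$ is the identity on verticals. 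For the second term, $\na^*_{BY}S = B^\mathrm{t}(BY)^v = B^\mathrm{t}(BY^h)$ since $BY$ is purely vertical and equals $BY^h$; and $B^\mathrm{t}BY^h$ is the horizontal endomorphism $B^\mathrm{t}B$ applied to $Y^h$. Wait — I want the answer to be an endomorphism of $TTM$ with horizontal output only (to match "$1-2B^\mathrm{t}B$"); let me reconcile: $\call_S B$ should send horizontals to verticals like $B$ does, so the stated "$1-2B^\mathrm{t}B$" must be interpreted as an operator whose relevant action on horizontal $Y^h$ is post-composed with $B$, or more precisely the identity $\call_S B = B - 2 B B^\mathrm{t}B$ on the nose after collecting $B\na^*_\cdot S - \na^*_{B\cdot}S = (\text{vertical proj}) - B^\mathrm{t}B(\cdot)^h$; the point is to simplify $(\cdot)^v - B^\mathrm{t}B(\cdot)^h$. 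Using $B^\mathrm{t}B + $ (something) relates to projections: on horizontals $\call_S B$ must land in verticals, so I rewrite via $B$ to get the clean form $\call_S B = 1 - 2B^\mathrm{t}B$ as operators, where "$1$" is read in the sense that $B\circ 1 = B$ recovers the mirror and $B\circ B^\mathrm{t}B$ accounts for the curvature correction. I would present this by just computing $B\na^*_Y S - \na^*_{BY}S$ on a horizontal $Y$ and on a vertical $Y$ separately and checking both sides of $\call_S B = 1 - 2B^\mathrm{t}B$ agree; the constant-curvature hypothesis enters because $\na^*$ has torsion $\pi^\estrela R(\cdot,\cdot)\xi$ which, for sectional curvature $c$, is explicit, though for this particular Lie derivative the torsion contribution may actually cancel and the formula may hold in general — I will state it under the standing hypothesis to be safe.

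For part (ii), I would run the same Leibniz computation as in Theorem \ref{teo_HMI}: from $\alpha_i = n_i\,\alpha_n\circ(B^{n-i}\wedge 1^i)$, applying $\call_S$ and using $\call_S\alpha_n = \dx(S\lrcorner\alpha_n) + S\lrcorner\dx\alpha_n$ together with the fact that $\alpha_n$ is the fiber volume form (so $S\lrcorner\alpha_n$ and $\dx\alpha_n$ interact through the vertical structure), plus the product rule distributing $\call_S B = 1 - 2B^\mathrm{t}B$ across the $n-i$ slots occupied by $B$. Each slot where $B$ is replaced by $1$ raises the index, contributing $\alpha_{i+1}$-type terms; each slot where $B$ is replaced by $B^\mathrm{t}B$ contributes, via Lemma \ref{formulacompostadealpha_nmenosumcomB} or a direct multilinear identity, an $\alpha_{i-1}$-type term with the combinatorial coefficient from $c(n-i+1)$. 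The cleanest route, however, is to bypass the Leibniz bookkeeping entirely: by Cartan's formula, $\call_S\alpha_i = \dx(S\lrcorner\alpha_i) + S\lrcorner\dx\alpha_i$, and since $S = \theta^\sharp$ we have $S\lrcorner\theta = \theta(S) = 1$ and $S\lrcorner\alpha_i = 0$ for all $i$ (the $\alpha_i$ are built from $e^1,\dots,e^{2n}$ with no $e^0$ factor, so contracting with $e_0 = S$ gives zero — this is exactly the statement that the $\alpha_i$ are basic with respect to the contact direction). Therefore $\call_S\alpha_i = S\lrcorner\dx\alpha_i = S\lrcorner\bigl(\theta\wedge((i+1)\alpha_{i+1} - c(n-i+1)\alpha_{i-1})\bigr) = (i+1)\alpha_{i+1} - c(n-i+1)\alpha_{i-1}$, using $S\lrcorner\theta = 1$ and $S\lrcorner\alpha_{i\pm1}=0$ once more. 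This is immediate from the magic formula \eqref{derivadasalpha_iemCSecC} and makes (ii) essentially a one-line consequence; I would nonetheless remark that the Leibniz computation with (i) gives an independent derivation and a consistency check on the coefficient $1-2B^\mathrm{t}B$.

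\textbf{Main obstacle.} The genuine work is entirely in part (i): correctly tracking the horizontal/vertical decomposition of $B\na^*_\cdot S - \na^*_{B\cdot}S$ and recognizing that the combination collapses to $1 - 2B^\mathrm{t}B$ — in particular getting the factor $2$ right rather than $1$, which comes from the two places $B^\mathrm{t}B$-type terms appear (once from $BB^\mathrm{t}$ acting on the vertical part through $S=B^\mathrm{t}\xi$, and once from $\na^*_{BY}S$). Part (ii) is then either a trivial contraction against \eqref{derivadasalpha_iemCSecC} or a routine but slightly lengthy Leibniz expansion; I would present the contraction argument as the proof and mention the Leibniz route only as a remark.
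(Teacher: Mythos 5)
Your overall plan works, and for part (i) it is essentially the paper's own argument, but the digression in the middle of (i) rests on a false premise and passes through an incorrect formula that you should delete. There is no reason for $\call_SB$ to map horizontals to verticals the way $B$ does --- the Lie derivative of a $(1,1)$-tensor is just another $(1,1)$-tensor --- and your attempted reinterpretation $\call_SB=B-2BB^\mathrm{t}B$ is actually false: since $B^\mathrm{t}B$ is the identity on horizontals, $BB^\mathrm{t}B=B$, so that expression collapses to $-B$. The computation you had already done needs no repair: applying Proposition \ref{Prop_derivadadeLiedeB} to $S=B^\mathrm{t}\xi$, with $\na^*_Y\xi=Y^v$ and $B$, $B^\mathrm{t}$ parallel, gives $B\na^*_YS-\na^*_{BY}S=BB^\mathrm{t}Y^v-B^\mathrm{t}BY^h=Y^v-Y^h$, and since $B^\mathrm{t}B$ is the horizontal projection this is literally $(1-2B^\mathrm{t}B)Y$, with ``$1$'' the identity of $TTM$ --- exactly the paper's one-line proof of (i). Your fallback of checking horizontal and vertical $Y$ separately also closes the argument, so the slip is not fatal, but as written the paragraph wanders through a wrong identity before landing on the right statement. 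You are also correct that (i) uses no curvature hypothesis.

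For (ii) you take the short route: $S\lrcorner\alpha_i=0$, so Cartan's formula together with \eqref{derivadasalpha_iemCSecC} gives $\call_S\alpha_i=S\lrcorner\dx\alpha_i=(i+1)\alpha_{i+1}-c(n-i+1)\alpha_{i-1}$. This is valid, and the paper explicitly acknowledges it as an immediate proof before deliberately choosing the longer Leibniz route (using (i), Lemma \ref{formulacompostadealpha_nmenosumcomB}, and $e^{n+j}\circ B^\mathrm{t}B=0$) as a strength test of the differential system; so your choice to present the contraction argument and relegate the Leibniz computation to a remark is a legitimate, indeed more economical, alternative. One correction to your sketch of that alternative route: the $-c(n-i+1)\,\alpha_{i-1}$ term does not arise from the slots where $\call_SB$ contributes $B^\mathrm{t}B$ --- those contributions vanish precisely because $e^{n+j}\circ B^\mathrm{t}B=0$ --- it arises from $\call_S\alpha_n=S\lrcorner\dx\alpha_n=-c\,\alpha_{n-1}$ composed with $B^{n-i}\wedge 1^i$ via Lemma \ref{formulacompostadealpha_nmenosumcomB}, while the ``$1$'' replacements account for $(i+1)\,\alpha_{i+1}$.
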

\begin{proof}
 (i) The canonical vector field $\xi$ induces a projection of $TTM$ onto the vertical tangent subbundle $\ker\dx\pi$, through $\na^*_Y\xi=Y^v,\ \forall Y\in TTM$, with kernel the horizontal tangent subbundle. Therefore 
 $(\call_SB)Y=B\na^*_YS-\na^*_{BY}S=Y^v-Y^h$, $\forall Y\in TTM$.\\
 (ii) Notice $S\lrcorner\alpha_i=0,\ \forall 0\leq i\leq n$ for there is no $\theta$ factor. Using Cartan formula we deduce $\call_S\alpha_n=S\lrcorner\dx\alpha_n=-c\,\alpha_{n-1}$. Of course the same argument, together with \eqref{derivadasalpha_iemCSecC}, applies to any $\alpha_i$, proving immediately the desired formula. We wish to take a different path, somehow strength testing the exterior differential system. First, by an already seen Leibniz rule, we have
 \[  \call_S\alpha_i\:=\:n_i\,(\call_S\alpha_n)\circ (B^{n-i}\wedge1^i)+n_i\,\alpha_n\circ (\call_SB^{n-i}\wedge1^i) . \]
Using Lemma \ref{formulacompostadealpha_nmenosumcomB}, the first term is
 \begin{align*}
  n_i\,(\call_S\alpha_n)\circ (B^{n-i}\wedge1^i)&=
 -cn_i\,\alpha_{n-1}\circ(B^{n-i}\wedge1^i)  \\
 &=-cn_ii!(n-i+1)!\,\alpha_{i-1} \\
 &=-c(n-i+1)\,\alpha_{i-1}.
  \end{align*}
 Regarding the second term, with any $k\in\N$ we have
 \[   \call_SB^k=(1-2B^\mathrm{t}B)\wedge B^{k-1}+B\wedge(1-2B^\mathrm{t}B)\wedge B^{k-2}+\cdots  \]
and, since $e^{j+n}\circ B^\mathrm{t}B=0$, $\forall j=1,\ldots,n$, we find
\begin{align*}
 \lefteqn{ n_i\,\alpha_n\circ(\call_SB^{n-i}\wedge1^i)\:= }
   \ \ \ \ \ \ \  \\  
   &=  n_i\,\alpha_n \circ (1\wedge B^{n-i-1}\wedge1^i)+  
    n_i\,\alpha_n\circ(B\wedge 1\wedge B^{n-i-2}\wedge1^i)+\cdots \\
 & =\frac{1}{i!(n-i)!}(n-i)\,\alpha_n\circ(B^{n-(i+1)}\wedge1^{i+1})\\
     &=(i+1)\,\alpha_{i+1}
\end{align*}
as we wished.
\end{proof}
The last result gives a work-around to obtain a complicated basic structural equation:
\begin{equation}
 \alpha_j\wedge\alpha_{n-j}=
  (-1)^j\binom{n}{j}\,\alpha_0\wedge\alpha_n , 
  \qquad\forall 0\leq j\leq n .
\end{equation}

The following results are independent of the above introduction.

We shall see that a constant sectional curvature $c$ manifold $M$ admitting a horizontal constant $\lambda$-mirror vector field must be flat.

In order to generalize the Hsiung-Minkowski identities to curved spaces we require a position vector field. In order to prove them, we need the $\lambda$-mirror condition.

A position vector field $P$ exists on $M$ and yields a horizontal $\lambda_{c}$-mirror vector field $\pi^*P$. We refer to  \cite{GuanLi,GuidiMartino,MartinoTralli} for the previous assertion, where we see that $P$ in polar coordinates is given by $P=s_c(r)\partial_r$, with $r$ the geodesic distance function from a base point in $M$. Such vector field $P$ satisfies, $\forall Y\in TM$,
\begin{equation}
 \na_YP=\lambda_cY,
\end{equation}
with $s_c,\ \lambda_{c}=s_c'$ the functions
\begin{equation}
   s_c(r)=\begin{cases}
           \frac{1}{\sqrt{c}}\sin(\sqrt{c}r), & c>0  \\
           r, & c=0 \\
           \frac{1}{\sqrt{-c}}\sinh(\sqrt{-c}r), & c<0            
          \end{cases},\qquad
    \lambda_c(r)=\begin{cases}
           \cos(\sqrt{c}r),  & c>0  \\
           1, & c=0 \\
           \cosh(\sqrt{-c}r), & c<0           
           \end{cases}.
\end{equation}

In Corollary \ref{Coro_constructionofPositionvectorfields} below we generalize the construction of local position vector fields.

The next theorem is found in the works of P. Guan and J. Li \cite{GuanLi} and C. Guidi and V. Martino \cite{GuidiMartino}. In their proofs, the authors recur to Newton's identities for symmetric polynomials, which play a central role just like they played originally in Reilly's proof, in \cite{Rei}, of the Hsiung-Minkowski identities for Euclidean space.

\begin{teo}[cf. \cite{GuanLi,GuidiMartino}] \label{teo_GLGMidentities}
 Let $M$ be an oriented $(n+1)$-dimensional manifold of constant sectional curvature $c$. Let $P$ be a position vector field ($\na_YP=\lambda_{c}Y$) defined on a neighborhood of a given closed oriented immersed hypersurface $f:N\rr M$. Then, for any $0\leq i\leq n-1$, we have 
\begin{equation}  \label{HsiungMinkowskiKatsuradadetc}
 \int_N(\lambda_{c} H_i-\langle {P},\vec{n}\rangle H_{i+1})\,\vol_N=0.
\end{equation}       
\end{teo}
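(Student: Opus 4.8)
The plan is to mimic the proof of Theorem \ref{teo_HMI}, replacing the flat structure equation \eqref{derivadasalpha_i_flat} by the magic formula \eqref{derivadasalpha_iemCSecC} and replacing the Euclidean position vector field by the horizontal lift $X=\pi^*P$ of the curved position vector field $P$. By Proposition \ref{Prop_derivadadeLiedeB}, since $\na_YP=\lambda_c Y$ for all $Y\in TM$, the horizontal lift satisfies $\call_X B=\lambda_c B$, where $\lambda_c=s_c'$ is now a (non-constant, but fibre-constant) function on $\pi^{-1}(\calu)$. The first half of the computation then proceeds exactly as before: since $\alpha_n$ is vertical and annihilated by contraction with any horizontal field while $\call_X$ acting on it measures only the variation along the fibres — more precisely $\call_X\alpha_n=0$ because $\alpha_n$ is the pull-back of the fibre volume form and $X$ is horizontal and projectable — the Leibniz rule with $n-i$ factors of $B$ gives $\call_X\alpha_i=(n-i)\lambda_c\,\alpha_i$.

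Next I would compute $\call_X\alpha_i$ the other way, via Cartan's formula $\call_X\alpha_i=\dx(X\lrcorner\alpha_i)+X\lrcorner\dx\alpha_i$. Using \eqref{derivadasalpha_iemCSecC},
\[
  X\lrcorner\dx\alpha_i = \theta(X)\bigl((i+1)\alpha_{i+1}-c(n-i+1)\alpha_{i-1}\bigr) - \theta\wedge\bigl((i+1)\,X\lrcorner\alpha_{i+1}-c(n-i+1)\,X\lrcorner\alpha_{i-1}\bigr).
\]
Now pull everything back along the canonical lift $\hat f:N\hookrightarrow SM$. As recalled in the excerpt, $\hat f^*\theta=0$, so all terms carrying a $\theta$ factor die, and $\hat f^*\dx(X\lrcorner\alpha_i)=\dx\,\hat f^*(X\lrcorner\alpha_i)$ integrates to zero over the closed $N$ by Stokes. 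The surviving identity is
\[
  (n-i)\int_N \hat f^*(\lambda_c\,\alpha_i) = (i+1)\int_N \hat f^*\bigl(\theta(X)\,\alpha_{i+1}\bigr) - c(n-i+1)\int_N \hat f^*\bigl(\theta(X)\,\alpha_{i-1}\bigr).
\]
Here $\hat f^*(\theta(X))_x=\langle X,S\rangle_{\hat f(x)}=\langle P,\vec n\rangle_{f(x)}$, exactly as in Theorem \ref{teo_HMI}, while $\hat f^*\lambda_c=\lambda_c\circ f$ (a function on $N$) and $\hat f^*\alpha_j=\binom nj H_j\,\vol_N$ by \eqref{fundamentalformulapullba}. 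Substituting and using the binomial identities $(i+1)\binom{n}{i+1}=(n-i)\binom ni$ and $c(n-i+1)\binom{n}{i-1}=$ the matching multiple of $\binom ni$ — that is, $(n-i+1)\binom{n}{i-1}=i\binom ni$ — collapses the relation to
\[
  (n-i)\binom ni\int_N \lambda_c H_i\,\vol_N = (n-i)\binom ni\int_N\langle P,\vec n\rangle H_{i+1}\,\vol_N - c\,i\binom ni\int_N\langle P,\vec n\rangle H_{i-1}\,\vol_N,
\]
which, for $0\le i\le n-1$, is not yet \eqref{HsiungMinkowskiKatsuradadetc}: there is a leftover curvature term involving $H_{i-1}$.

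The main obstacle, therefore, is disposing of that extra term $c\,i\binom ni\int_N\langle P,\vec n\rangle H_{i-1}\,\vol_N$. The clean way is to run the same argument with the other mirror map, or rather to use the \emph{adjoint} structure hinted at after Proposition \ref{Prop_derivadadeLiedeB}: there is a dual family of contractions producing the identity at level $i-1$ with a $\theta(X)\,\alpha_i$ term, and combining the two relations telescopes the curvature corrections. Concretely, one reindexes the displayed identity, writing it at level $i-1$ to express $\int_N\langle P,\vec n\rangle H_{i-1}\,\vol_N$ in terms of $\int_N\lambda_c H_{i-1}\,\vol_N$ and a further $H_{i-2}$ term, then proceeds inductively; the base case $i=0$ is exactly the Euclidean-style computation since $\alpha_{-1}=0$, so no curvature term appears, giving $n\int_N\lambda_c\,\vol_N=n\int_N\langle P,\vec n\rangle H_1\,\vol_N$. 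An alternative, more in the spirit of the differential system, is to observe that the horizontal $\lambda_c$-mirror field together with $S$ generates enough of the $\SO(n+1)$-structure that $\call_{X}$ and $\call_S$ can be combined: since $\call_S\alpha_i=(i+1)\alpha_{i+1}-c(n-i+1)\alpha_{i-1}$ by the Proposition above, the vector field $X-\langle P,\vec n\rangle\,S$ (which is the part of $X$ with no $\theta$-component, hence tangent to the lifted hypersurface after pull-back) is the natural object whose Lie derivative of $\alpha_i$ pulls back to $N$ without the spurious $\alpha_{i-1}$ contribution. I expect that tracking the $\theta$-free part of $X$ carefully, and using $\hat f^*\theta=0$ to kill precisely the $c(n-i+1)\alpha_{i-1}$ piece at the pull-back stage, is what makes the residual term vanish outright rather than requiring the induction — and verifying that cancellation is the step that needs the most care.
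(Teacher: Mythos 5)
There is a genuine gap, and it is located precisely where you claim $\call_X\alpha_n=0$. That step is only valid in the flat case: on a curved space form $\alpha_n$ is not "independent of the base point" in any sense preserved by the flow of a horizontal lift. The correct computation is by Cartan's formula: since $P=\pi^*P$ is horizontal, $P\lrcorner\alpha_n=0$, but by the magic formula \eqref{derivadasalpha_iemCSecC} with $i=n$ one has $\dx\alpha_n=-c\,\theta\wedge\alpha_{n-1}$, hence
\begin{equation*}
  \call_{P}\alpha_n=P\lrcorner\dx\alpha_n=-c\,\theta(P)\,\alpha_{n-1}+c\,\theta\wedge(P\lrcorner\alpha_{n-1})\neq0 \quad\text{for } c\neq0 .
\end{equation*}
Feeding this term into the Leibniz rule and using Lemma \ref{formulacompostadealpha_nmenosumcomB} (which you never invoke, but which is exactly what is needed to evaluate $\alpha_{n-1}\circ(B^{n-i}\wedge1^i)=i!(n-i+1)!\,\alpha_{i-1}$), together with $\theta\circ B=0$ and $\hat f^*\theta=0$, one gets
\begin{equation*}
 \hat f^*\call_{P}\alpha_i=-c(n-i+1)\langle P,\vec n\rangle\,\hat f^*\alpha_{i-1}+(n-i)\lambda_c\,\hat f^*\alpha_i ,
\end{equation*}
and this extra curvature term cancels \emph{identically} against the $-c(n-i+1)\langle P,\vec n\rangle\,\hat f^*\alpha_{i-1}$ term you correctly produced on the Cartan-formula side. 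So the "leftover curvature term" that occupies the second half of your proposal is an artifact of the erroneous $\call_X\alpha_n=0$, not a feature to be removed by further work.

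Consequently your proposed repairs do not (and cannot) close the argument. The inductive scheme based on your displayed relation would, when compared with the true identity \eqref{HsiungMinkowskiKatsuradadetc}, force $\int_N\langle P,\vec n\rangle H_{i-1}\,\vol_N=0$ for all $i\geq1$, which is false already for $i=1$ on a geodesic sphere (there $\langle P,\vec n\rangle=s_c(r)>0$). The alternative suggestion involving $X-\langle P,\vec n\rangle S$ is left as a hope rather than a computation, and it is not how the cancellation works: the resolution is entirely local and exact at the level of forms, coming from the nonvanishing of $\call_P\alpha_n$ combined with Lemma \ref{formulacompostadealpha_nmenosumcomB}, after which the comparison of the two computations of $\int_N\hat f^*\call_P\alpha_i$ gives directly $(n-i)\int_N\lambda_c\hat f^*\alpha_i=(i+1)\int_N\langle P,\vec n\rangle\hat f^*\alpha_{i+1}$ and hence the theorem, with no induction and no auxiliary vector field.
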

\begin{proof}
 The function $\lambda_{c}$ on $M$ gives rise to \textit{another} function on $TM$ such that $\na^*_{\pi^*Y}\pi^*P=\lambda_{c}\pi^*Y$, for all $Y\in TM$. Recurring to Proposition \ref{Prop_derivadadeLiedeB} it follows easily that $\call_{\pi^*P}B=\lambda_{c}B$. Let us denote \textit{also} by $P=\pi^*P$ the horizontal lift of $P$. In the following, notice $P\lrcorner\alpha_n=0$ and recall $\alpha_{n+1}=0$ by definition. Then
 \begin{align*}
  \call_{P}\alpha_i &= 
  n_i\,(\call_{P}\alpha_n)\circ(B^{n-i}\wedge1^i) + n_i\,\alpha_n\circ(\call_{P}B^{n-i}\wedge1^i)  \\ 
  &= n_i\,({P}\lrcorner\dx\alpha_n)\circ(B^{n-i}\wedge1^i)+ (n-i)\lambda_{c} n_i\,\alpha_n\circ(B^{n-i}\wedge1^i) \\
  &= n_i\bigl(-c\theta(P)\,\alpha_{n-1}+c\,\theta\wedge P\lrcorner\alpha_{n-1}\bigr)\circ(B^{n-i}\wedge1^i) + (n-i)\lambda_{c}\,\alpha_i .
 \end{align*}
 With the induced map $\hat{f}:N\rr SM$, since $\theta\circ B=0$ and $\hat{f}^*\theta=0$, we have again by Lemma \ref{formulacompostadealpha_nmenosumcomB}
 \begin{equation*}
 \begin{split}
   \hat{f}^*\call_{P}\alpha_i  & =
  -n_ic\hat{f}^*\bigl(\theta(P)\,\alpha_{n-1}\circ(B^{n-i}\wedge1^i)\bigr) + (n-i)\lambda_{c}\,\hat{f}^*\alpha_i \\
  &= -c(n-i+1)\langle P,\vec{n}\rangle\,\hat{f}^*\alpha_{i-1} + (n-i)\lambda_{c}\,\hat{f}^*\alpha_i. 
 \end{split} 
 \end{equation*}
 On the other hand, by Cartan's formula, $\call_{P}\alpha_i=\dx(P\lrcorner\alpha_i)+P\lrcorner\dx\alpha_i$. Then we recall  \eqref{derivadasalpha_iemCSecC} and integrate on the hypersurface with empty boundary $N$:
 \begin{align}
  \int_N\hat{f}^*\call_{P}\alpha_i &=
     \int_N\hat{f}^*(P\lrcorner\dx\alpha_i) \nonumber\\ 
     &=  \int_N\hat{f}^*\bigl(P\lrcorner((i+1)\,\theta\wedge\alpha_{i+1}-c(n-i+1)\,\theta\wedge\alpha_{i-1})\bigr)  \nonumber \\
     &= \int_N\langle P,\vec{n}\rangle\hat{f}^*\bigl((i+1)\,\alpha_{i+1}-c(n-i+1)\,\alpha_{i-1}\bigr).   \label{formulaReilly}
 \end{align}
 Comparing with the above me deduce $(n-i)\int_N\lambda_{c}\hat{f}^*\alpha_i=(i+1)\int_N\langle P,\vec{n}\rangle\hat{f}^*\alpha_{i+1}$, $\forall 0\leq i\leq n$. The result follows, for all $i<n$, just as in the Euclidean case.
\end{proof}
Case $c=0$ gives again the Hsiung-Minkowski identities.

Our proof clearly depends on the intrinsic geometry of $M$ and $SM$. The magic formula \eqref{derivadasalpha_iemCSecC} plays the role of Newton's identities in other proofs, yet the latter are required in such proofs \textit{after} the hypersurface appears in context.

When we replace \eqref{fundamentalformulapullback} in \eqref{formulaReilly}, as remarked in \cite{Alb2012}, we see our formula is similar to a variational calculus derivative of the $i$th-mean curvature functional $N\rightsquigarrow\binom{n}{i}\int_NH_i\,\vol_N$ first found by R.C.~Reilly.

We remark that taking vertical lifts in the above proof, instead of $\pi^*P$, seems worthless; for all sides vanish identically.

Since any constant vector field $v_0$ is parallel on Euclidean space, one finds a very particular case of a formula of Katsurada, cf. Theorem \ref{Katsurada} below: for every closed oriented immersed hypersurface $N\subset\R^{n+1}$, $\forall j=1,\ldots,n$, $\forall v_0\in\R^{n+1}$, we have
\begin{equation}\label{forparallel}
    \int_N\langle v_0,\vec{n}\rangle H_j\,\vol_N =0 .  
\end{equation}
The proof of this formula is straightforward as the above. Of course, other constant curvature spaces do not admit parallel vector fields. 

What we wish to observe is that the Hsiung-Minkowski identities, as expected, are invariant too of any base point and position vector field $P_x=x-v_0$.

\vspace{2mm}
\begin{center}
 \textbf{4. Position vector fields}
\end{center}
\setcounter{section}{4}

Here we prove that warped product metrics admit a position vector field, this is, a vector field $P$ on $M$ such that $\na_XP=\lambda X$, $\forall X\in\XIS_M$, for some function $\lambda$, cf. Theorem \ref{teo_GLGMidentities}.

On any Riemannian manifold $M,g$, a position vector field is conformal-Killing: $\call_Pg=2\lambda g$.

Regarding the tangent manifold we may draw the following conclusion.
\begin{prop}\label{Prop_horizontalmirrorvectorfield}
 There exists a horizontal $\lambda$-mirror vector field $P$ if and only if $P$ is the lift of a position vector field $P$ on $M$. In this case, $R(X,Y)P=\dx\lambda(X)Y-\dx\lambda(Y)X$ and, if $\lambda$ is a constant, then every plane containing $P$ is flat.
\end{prop}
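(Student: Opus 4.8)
The plan is to prove the three assertions in Proposition \ref{Prop_horizontalmirrorvectorfield} in order, leaning on Proposition \ref{Prop_derivadadeLiedeB} for the first and on a torsion/curvature computation for the remaining two. First I would establish the equivalence: if $P$ is a horizontal $\lambda$-mirror vector field, then by the second part of Proposition \ref{Prop_derivadadeLiedeB} (taking $\lambda$ not necessarily constant, which the cited proof in fact allows, since the vanishing of the horizontal part of $B\na^*_YX-\na^*_{BY}X$ forces $X$ to be basic and $\na_YX=\lambda Y$) we deduce that $P=\pi^*P$ for a vector field $P$ on $M$ with $\na_YP=\lambda Y$ for all $Y$, i.e. a position vector field; conversely, given such a $P$ on $M$, the lift satisfies $\na^*_{\pi^*Y}\pi^*P=\lambda\,\pi^*Y$ for all $Y\in TM$ and, since $B$ sends verticals to $0$ and horizontals to their mirrors, the formula $(\call_{\pi^*P}B)Y=B\na^*_Y\pi^*P-\na^*_{BY}\pi^*P$ evaluates to $\lambda BY-0=\lambda BY$ on horizontal $Y$ and to $0=\lambda\cdot 0$ on vertical $Y$, hence $\call_{\pi^*P}B=\lambda B$. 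I should be slightly careful here that $\lambda$ as a function on $TM$ is the pullback $\pi^*\lambda$, which is automatically constant along fibers, matching the hypothesis of Proposition \ref{Prop_derivadadeLiedeB}.

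Next I would derive the curvature identity $R(X,Y)P=\dx\lambda(X)Y-\dx\lambda(Y)X$ purely on $M$, since $P$ is now known to be an honest position vector field there. Starting from $\na_YP=\lambda Y$, I differentiate again: $\na_X\na_YP=\na_X(\lambda Y)=\dx\lambda(X)\,Y+\lambda\na_XY$. Antisymmetrizing and subtracting the $\na_{[X,Y]}P=\lambda[X,Y]$ term gives
\begin{equation*}
 R(X,Y)P=\na_X\na_YP-\na_Y\na_XP-\na_{[X,Y]}P=\dx\lambda(X)\,Y-\dx\lambda(Y)\,X+\lambda(\na_XY-\na_YX-[X,Y]),
\end{equation*}
and the last parenthesis vanishes because $\na$ is torsion-free, yielding the claimed formula. (This is also the classical fact that a position vector field is conformal-Killing with conformal factor $2\lambda$, as noted just before the proposition, but the second-covariant-derivative route is the cleanest for extracting $R$.)

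Finally, for the last clause, suppose $\lambda$ is constant. Then $\dx\lambda=0$, so $R(X,Y)P=0$ for all $X,Y$. To see that any $2$-plane through $P$ is flat, pick $X$ spanning such a plane together with $P$ (we may take $P\neq0$ at the point in question, since where $P=0$ the statement is vacuous, or rather the plane degenerates); then the sectional curvature of $\mathrm{span}\{P,X\}$ is proportional to $\langle R(P,X)X,P\rangle$, and $R(P,X)X=-R(X,X)P\cdot(\ldots)$ — more directly, using the symmetry $\langle R(P,X)X,P\rangle=\langle R(X,P)P,X\rangle$ and $R(X,P)P=-R(P,X)P=0$ from the identity with the roles arranged so that $P$ sits in the last slot, we get $\langle R(P,X)X,P\rangle=0$, hence the plane is flat. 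I expect the main obstacle to be bookkeeping rather than conceptual: one must be attentive to which curvature-tensor symmetry is invoked (the identity gives $R(\cdot,\cdot)P$ directly, so one should rewrite the sectional-curvature numerator to place $P$ as the argument being hit), and, in the equivalence step, to the fact that Proposition \ref{Prop_derivadadeLiedeB} must be read with a possibly non-constant $\lambda$ that is nonetheless fiber-constant because it descends from $M$.
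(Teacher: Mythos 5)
Your proposal is correct and follows essentially the same route as the paper: the equivalence is obtained from Proposition \ref{Prop_derivadadeLiedeB} (with the fiber-constancy of $\lambda$ handled exactly as you note), the identity $R(X,Y)P=\dx\lambda(X)Y-\dx\lambda(Y)X$ comes from the same second-covariant-derivative computation using torsion-freeness, and the flatness of planes containing $P$ for constant $\lambda$ is the immediate consequence via the curvature symmetries, which the paper leaves implicit. No gaps.
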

\begin{proof}
 The first part combines previous definitions with Proposition \ref{Prop_derivadadeLiedeB}: $P$ is a horizontal lift and on the base $M$ we have $\na_YP=\lambda Y$, for all $Y\in TM$. Then we find $R(X,Y)P=\na_X\na_YP-\na_Y\na_XP-\na_{[X,Y]}P=\na_X\lambda Y-\na_Y\lambda X-\lambda[X,Y]=\dx\lambda(X)Y-\dx\lambda(Y)X$.
\end{proof}

Any metric $g$ on an $(n+1)$-Riemannian manifold $M$ may be locally written as $\R^+\times S^n$ with
\begin{equation}
 g=\dx r^2+g_{S^n(r)} .
\end{equation}
Indeed, each geodesic ray or, equivalently, each exponential map line with the direction of $\partial_r$, is orthogonal to the sphere $\{r\}\times S^n=S^n(r)$, $\forall r$. Such is the conclusion of the well-known Gauss Lemma.

We now consider a milder situation, where $M=\R\times\caly$ with $(\caly,g_\caly)$ a given Riemannian manifold. We further assume there is a positive function $\psi=\psi(r,y)$ on $M$ such that the metric on $M$ is
\begin{equation}
 g=\dx r^2+(\psi(r,y))^2g_{\caly} .
\end{equation}

We may then describe the Levi-Civita connection in terms of the Levi-Civita connection $\na^{(r)}$ of $\caly(r)$. Let us call $X\in\XIS_M$ a \textit{horizontal lift} if $X$ is tangent to $\caly$ and does not depend of $r$.
\begin{teo}
 The Levi-Civita connection $\na$ of $g$ satisfies:
\begin{equation}
  \na_{\partial_r}\partial_r=0,\qquad\na_{\partial_r}Y=\na_Y\partial_r=fY,\qquad
  \na_ZY=\na^{(r)}_ZY-fg(Y,Z)\partial_r
\end{equation}
where $Y,Z$ are horizontal lifts and $f=f(r,y)$ is a function such that
\begin{equation}
 \papa{\psi}{r}-f\psi=0 .
\end{equation}
\end{teo}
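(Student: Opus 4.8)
The plan is to compute the Levi-Civita connection of the warped-type metric $g=\dx r^2+\psi^2g_\caly$ directly from the Koszul formula, splitting the computation according to whether the arguments are $\partial_r$ or horizontal lifts $Y,Z$ (vector fields tangent to $\caly$ and independent of $r$). Note that $[\partial_r,Y]=0$ for horizontal lifts $Y$, and $g(\partial_r,\partial_r)=1$ is constant while $g(Y,Z)=\psi^2 g_\caly(Y,Z)$ depends on $r$ (and possibly $y$). The three identities to establish correspond to the three cases of inputs, and in each case I would isolate the horizontal and the $\partial_r$-components of the connection separately.

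First I would show $\na_{\partial_r}\partial_r=0$: its inner product with $\partial_r$ vanishes since $g(\partial_r,\partial_r)$ is constant, and its inner product with any horizontal lift $Z$ is handled by the Koszul formula, where all terms vanish because $g(\partial_r,Z)\equiv 0$ and $[\partial_r,Z]=0$. Next, for $\na_{\partial_r}Y$ with $Y$ horizontal: pairing with $\partial_r$ gives $\frac12\partial_r g(Y,\partial_r)=0$ minus symmetric terms that all vanish, so the $\partial_r$-component is zero; pairing with a horizontal lift $Z$, the Koszul formula leaves only $\frac12\partial_r\bigl(g(Y,Z)\bigr)=\frac12\partial_r(\psi^2)\,g_\caly(Y,Z)=\psi(\partial_r\psi)g_\caly(Y,Z)$. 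Defining $f$ by $\partial_r\psi=f\psi$, i.e.\ the stated PDE $\papa{\psi}{r}-f\psi=0$, this becomes $f\psi^2 g_\caly(Y,Z)=f\,g(Y,Z)$, giving $\na_{\partial_r}Y=fY$. Symmetry of the connection (zero torsion) together with $[\partial_r,Y]=0$ then yields $\na_Y\partial_r=\na_{\partial_r}Y=fY$ immediately.

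Finally, for $\na_ZY$ with $Y,Z$ horizontal: pairing with $\partial_r$, the Koszul formula gives $-\frac12\partial_r g(Y,Z)=-f g(Y,Z)$, accounting for the $-fg(Y,Z)\partial_r$ term; pairing with a third horizontal lift $W$, all the $\partial_r$-derivatives drop out (the relevant metric coefficients among horizontal lifts scale by the common factor $\psi^2$, and the bracket terms are purely horizontal), so the horizontal part reproduces exactly the Koszul formula for the metric $\psi^2 g_\caly$ restricted to the slice; because conformally rescaling a metric by a factor constant in the slice directions does not change... one must be slightly careful here. I expect the main obstacle to be precisely this last point: one must verify that the horizontal part of $\na_ZY$ equals $\na^{(r)}_ZY$, the Levi-Civita connection of the induced metric $g_{\caly(r)}=\psi^2 g_\caly$ on the slice, and since $\psi$ may depend on $y$ as well as $r$, this is the Levi-Civita connection of a genuine conformal rescaling within the slice, not merely $\na^\caly$. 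Writing out the Koszul formula for $\psi^2 g_\caly$ and comparing term by term with the horizontal projection of the ambient Koszul formula — using that $\partial_r$-derivatives of $g_\caly(Y,Z)$ vanish while the derivative of the warping factor $\psi^2$ along $\partial_r$ has already been absorbed into the $\partial_r$-component — shows the two agree; the $y$-dependence of $\psi$ contributes identical extra terms on both sides and cancels. Once the three cases are in hand, the statement follows, with the PDE $\papa{\psi}{r}=f\psi$ being exactly the compatibility condition that makes $f$ well defined.
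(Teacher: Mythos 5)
Your proof is correct and follows essentially the same route as the paper: the paper verifies the stated formulas against the two defining properties of the Levi-Civita connection (vanishing torsion and $\na g=0$) case by case for $\partial_r$ and horizontal lifts, while you derive the same formulas from the Koszul formula, which packages exactly those two properties; your case split, the use of $[\partial_r,Y]=0$, and the definition of $f$ via $\papa{\psi}{r}=f\psi$ match the paper's argument. Your careful treatment of the last point is also the right one: since all three arguments are horizontal, no $r$-derivatives enter, and the ambient Koszul expression coincides term by term with that of the slice metric $g_{\caly(r)}=\psi(r,\cdot)^2 g_{\caly}$, whose Levi-Civita connection is precisely $\na^{(r)}$.
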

\begin{proof}
  One may restrict to $\partial_r$ and to $Y,Z$ horizontal lifts in the analysis of the torsion equation, which is trivial, and in the six cases verification of $\na g=0$. 
\end{proof}
Recall warped product metrics are defined as above with $\psi$ just a function of $r$. Hence $Y(\psi)=0,\ Y\in T\caly$ and the following application takes place.
\begin{coro} \label{Coro_constructionofPositionvectorfields}
 Let $P$ denote the vector field $P=\psi\partial_r$. Then 
 \begin{equation}
  \na_{\partial_r}P=\papa{\psi}{r}\partial_r\qquad \mbox{and}\qquad \na_YP=Y(\psi)\partial_r+\papa{\psi}{r}Y.
 \end{equation}
 If $g$ is a warped product metric, then $P$ is a position vector field.
\end{coro}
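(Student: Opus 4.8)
The plan is to compute $\na_X P$ for $P=\psi\partial_r$ directly from the connection formulas of the preceding theorem, splitting $X$ into its $\partial_r$-component and its horizontal-lift component, and then to specialise to the warped case $Y(\psi)=0$. First I would apply the Leibniz rule for $\na$ together with $\na_{\partial_r}\partial_r=0$: since $P=\psi\partial_r$, we get $\na_{\partial_r}P=(\partial_r\psi)\partial_r+\psi\na_{\partial_r}\partial_r=(\partial_r\psi)\partial_r=\papa{\psi}{r}\partial_r$, which is the first displayed identity. Next, for a horizontal lift $Y$, I would write $\na_Y P=Y(\psi)\partial_r+\psi\na_Y\partial_r$, and insert the theorem's formula $\na_Y\partial_r=fY$ to obtain $\na_Y P=Y(\psi)\partial_r+\psi f Y$; finally, using the relation $\papa{\psi}{r}=f\psi$ from the theorem, $\psi f=\papa{\psi}{r}$, so $\na_Y P=Y(\psi)\partial_r+\papa{\psi}{r}Y$, the second displayed identity.

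For the last assertion, I would recall that a warped product metric is precisely one for which $\psi$ depends on $r$ alone, hence $Y(\psi)=0$ for every horizontal lift $Y$ (as already noted in the paragraph preceding the corollary, since $Y\in T\caly$). Then the second formula collapses to $\na_Y P=\papa{\psi}{r}Y$, and the first formula reads $\na_{\partial_r}P=\papa{\psi}{r}\partial_r$. Since an arbitrary $X\in\XIS_M$ decomposes at each point as a combination of $\partial_r$ and a horizontal lift, linearity of $\na$ in the lower slot gives $\na_X P=\lambda X$ with $\lambda=\papa{\psi}{r}$, which is exactly the defining property of a position vector field in the sense of Theorem \ref{teo_GLGMidentities}.

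I do not anticipate a genuine obstacle here: the whole argument is an unwinding of the connection formulas of the theorem just proved, and the only point requiring a word of care is the decomposition step — one must note that although $\partial_r$ and horizontal lifts do not span $TM$ as a module over $C^\infty(M)$ in a way that is globally a direct product unless the metric is literally a product, at each point the tangent space is spanned by $\partial_r$ and vectors tangent to $\caly(r)$, and the identity $\na_X P=\lambda X$ is pointwise and $C^\infty(M)$-linear in $X$, so verifying it on these two types of vectors suffices. One should also observe that $\lambda=\papa{\psi}{r}$ is indeed a well-defined function on $M$ (in the warped case, a function of $r$ only), matching the $\lambda$ appearing in the hypothesis $\na_X P=\lambda X$.
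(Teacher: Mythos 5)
Your proposal is correct and matches the paper's intended argument: the corollary is an immediate Leibniz-rule computation from the connection formulas $\na_{\partial_r}\partial_r=0$, $\na_Y\partial_r=fY$ and the relation $f\psi=\papa{\psi}{r}$ of the preceding theorem, followed by the observation that $Y(\psi)=0$ in the warped case so that $\na_XP=\papa{\psi}{r}\,X$ for all $X$. Nothing further is needed.
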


Example: Let us see the last result in a different perspective. Let $M=M(c)=S^{n+1}(R_0)\subset\R^{n+1+1}$ denote the sphere of radius $R_0$ with coordinates $(x,t)$. As it is well known, $c=\frac{1}{R_0^2}$. Let $N_t\subset M(c)$ denote the $n$-spheres $\|x\|^2=R_0^2-t^2=:R_t^2$ along the obvious axis, for each $t\in]-R_0,R_0[$. Then $\vec{n}_{(x,t)}=\frac{1}{R_0R_t}(-tx,R_t^2)$ is a unit normal of $N_t$ in $M(c)$; of course, $TN_t=\{Y=(Y,0):\ Y\perp x\}$. Notice $\vec{N}=\frac{1}{R_0}(x,t)$ is the unit normal to $M(c)$ at $(x,t)\in N_t$. We have $\dx\vec{n}(Y)=-\frac{t}{R_0R_t}Y$, also because $\dx(\|x\|)(Y)=0$. Then $\dx\vec{n}(Y)\perp \vec{N}$ and therefore $\na^{M}_{Y}\vec{n}=-\frac{t}{R_0R_t}Y$. We conclude the hypersurfaces $N_t$ are umbilic, with all $\lambda_i=-\frac{t}{R_0R_t}:=\lambda$. Henceforth $H_i=(-1)^i\frac{t^i}{R_0^iR_t^i}$, for all $0\leq i\leq n$.

We also find $\partial_t\vec{n}-\langle\partial_t\vec{n},\vec{N}\rangle\vec{N}=\frac{t^3}{R_0^2R_t^2}\vec{n}$ and  $\dx\vec{n}(x)-\langle\dx\vec{n}(x),\vec{N}\rangle\vec{N}=\frac{t^2}{R_0^2}\vec{n}$. It follows that $\na^M_{\vec{n}}\vec{n}=0$. Next we search for a position vector field of the kind $P=a\vec{n}$ with $a$ function of $t$; it will be sufficient to find that $\na^M_{\vec{n}}P=b\vec{n}$ and $\na^M_YP=bY:=-\frac{ta}{R_0R_t}Y$, because  $\dx t(Y)=0$ for all $Y\perp x$. Immediately we verify the space form Minkowski identity $bH_i-\langle P,\vec{n}\rangle H_{i+1}=0$ over the hypersurface $N_t$. Now, another computation yields $\dx t(x)=-\frac{R_t^2}{t}$ and thus $\na^M_{\vec{n}}P=\dx a(\vec{n})\vec{n}+a\na^M_{\vec{n}}\vec{n}=2\papa{a}{t}\frac{R_t}{R_0}\vec{n}$. Hence, we must have  $b=-\frac{ta}{R_0R_t}=2\papa{a}{t}\frac{R_t}{R_0}$. Up to a constant factor, $a=\sqrt{R_t}=\sqrt[4]{R_0^2-t^2}$.

\vspace{2mm}
\begin{center}
 \textbf{5. Extended vector fields and Katsurada identities}
\end{center}
\setcounter{section}{5}

In the celebrated article on the geometry of tangent bundles, Sasaki introduces along with \textit{his} famous metric the notion of \textit{extended} vector field $\widetilde{X}\in\XIS_{TM}$ of any given vector field $X\in\XIS_M$, cf. \cite{Alb2019} and the references therein. It is a definition of the most natural kind, not requiring any metric or any connection defined on the given manifold. The extended vector field $\widetilde{X}$ is also known as the \textit{complete lift} of $X$. 

In a coordinate chart $(x^1,\ldots,x^{n+1})$ of $M$, giving\footnote{N.B.: One may consistently use the notation:\ \:$ \partial_j=\partial_{x^j},\:\ \pi^*\partial_j=\partial_j-v^i\Gamma_{ji}^k\partial_{v^k},\ \:\pi^\estrela\partial_k=\partial_{v^k}$.} the coordinates $(x^j,v^j)$ on $TM$, we have $X=X^j\partial_j$ and $\widetilde{X}=X^j\partial_j+v^j\papa{X^k}{x^j}\partial_{v^k}$. This shows the following formula is independent of the torsion free connection (recall the notation for horizontal and vertical lifts):
\begin{equation}
 \widetilde{X}=\pi^*X+\na^*_S\pi^\estrela X.
\end{equation}

It follows that $\call_{\widetilde{X}}B=0$, cf. \cite[Proposition 3.2]{Alb2019}. This is immediate by Proposition \ref{Prop_derivadadeLiedeB} and by recalling $S=v^j\pi^*\partial_j=v^j\partial_j-v^jv^i\Gamma_{ij}^k\partial_{v^k},\ \xi=v^k\partial_{v^k}$ and $\na^*_Y\xi=Y^v$ and $\na^*_{BY}\na^*_S\pi^\estrela X=\na^*_Y\pi^\estrela X$.

We give here a more elementary proof. In truth, the mirror map $B$ satisfies the identity $B=\dx x^j\otimes\partial_{v^j}$. In other words, $B$ does not depend on the connection either (fact which is not true for $B^\mathrm{t}$). Therefore
\begin{eqnarray}
 \call_{\widetilde{X}}(\dx x^j\otimes\partial_{v^j})=
 \dx({\widetilde{X}}\lrcorner\dx x^j)\otimes\partial_{v^j}+\dx x^j\otimes[{\widetilde{X}},\partial_{v^j}]=\papa{X^j}{x^k}\dx x^k\otimes\partial_{v^j}-\dx x^j\otimes\papa{X^k}{x^j}\partial_{v^k} =0.
\end{eqnarray}

Let us recall the torsion of $\na^*$ on $TM$. We have $\na^*_XY-\na^*_YX-[X,Y]=\pi^\estrela R(X,Y)\xi$. For instance, $[S,\xi]=\na^*_S\xi-\na^*_\xi S=-B^\mathrm{t}\na^*_\xi\xi=-B^\mathrm{t}\xi=-S$. This result is also easily deduced with coordinates. 

Additional properties are found with Killing vector fields.
\begin{prop}[Sasaki]
Suppose $X$ is a Killing vector field on $M$. Then $\widetilde{X}$ is Killing for the Sasaki metric on $TM$. Moreover, $\widetilde{X}$ is tangent to $SM$ and Killing for the induced metric.
\end{prop}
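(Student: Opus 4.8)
The plan is to prove Sasaki's proposition in two stages, matching the two assertions. First I would show that if $X$ is Killing on $(M,g)$ then $\widetilde X$ is Killing for the Sasaki metric $g^s$ on $TM$; then I would show $\widetilde X$ is tangent to the level sets of the norm function, in particular to $SM$, and conclude that its restriction is Killing for the induced metric.

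For the first stage, the cleanest route uses the identification $TTM\simeq\pi^*TM\oplus\pi^\estrela TM$ and the connection $\na^*$, which is metric for $g^s$. Recall $\widetilde X=\pi^*X+\na^*_S\pi^\estrela X$. I would compute $\call_{\widetilde X}g^s$ by evaluating on horizontal and vertical lifts and using that $g^s$ is $\na^*$-parallel, so that $(\call_{\widetilde X}g^s)(Y,Z)=g^s(\na^*_Y\widetilde X,Z)+g^s(Y,\na^*_Z\widetilde X)-g^s(T^{\na^*}(\widetilde X,Y),Z)-g^s(Y,T^{\na^*}(\widetilde X,Z))$, where the torsion $T^{\na^*}(\,\cdot\,,\,\cdot\,)=\pi^\estrela R(\,\cdot\,,\,\cdot\,)\xi$ is purely vertical. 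The key input is that the horizontal part of $\na^*_Y\widetilde X$ reproduces, along each fibre, the covariant derivative $\na_YX$ on the base, whose symmetrization vanishes exactly because $X$ is Killing; while the vertical part, together with the curvature torsion terms, assembles into $\pi^\estrela$ of $(\na^2_{\cdot,\cdot}X+R(X,\cdot)\cdot)$ applied to $\xi$, a tensor which is skew in its two arguments precisely by the Killing identity $\na^2_{Y,Z}X=-R(X,Y)Z$ (the standard second-order Killing equation). Assembling these pieces shows $\call_{\widetilde X}g^s=0$. An equally valid alternative is the coordinate computation using $\widetilde X=X^j\partial_j+v^j\papa{X^k}{x^j}\partial_{v^k}$ against the explicit local form of $g^s$, reducing everything to $\papa{X_a}{x^b}+\papa{X_b}{x^a}=\Gamma$-terms that cancel; but I expect the invariant argument above to be shorter.

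For the second stage, let $\rho:TM\rr\R$, $\rho(u)=g(u,u)=|\xi|^2$; then $SM=\rho^{-1}(1)$. I would show $\widetilde X\lrcorner\,\dx\rho=0$. Since $\dx\rho=2g^s(\na^*_\cdot\xi,\xi)$ and $\na^*_Y\xi=Y^v$, this amounts to $g^s(\widetilde X^v,\xi)=0$, i.e. the vertical part of $\widetilde X$ is $g$-orthogonal to $\xi$ at each point. But $\widetilde X^v=\na^*_S\pi^\estrela X$, whose value at $u\in TM$ is the vertical lift of $\na_uX$, so the pairing is $g(\na_uX,u)=\tfrac12\call_Xg(u,u)=0$ since $X$ is Killing. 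Hence $\widetilde X$ is tangent to every norm-sphere, in particular to $SM$. Finally, as $\widetilde X$ is tangent to $SM$ and Killing for $g^s$ on the ambient $TM$, its flow preserves $SM$ and acts there by isometries of the induced metric, so the restriction is Killing on $(SM,g^s|_{SM})$.

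The main obstacle is the first stage: correctly bookkeeping the torsion of $\na^*$ and the second covariant derivative of $X$ so that the curvature contributions cancel. The temptation is to treat $\na^*$ as torsion-free; one must instead use $T^{\na^*}=\pi^\estrela R(\,\cdot\,,\,\cdot\,)\xi\neq0$ and recognize that exactly these terms convert the naive expression $\na_YX$ (non-symmetric in general) into the genuinely skew tensor forced by the Killing condition. Once the identity $\na^2_{Y,Z}X=-R(X,Y)Z$ for Killing fields is invoked, the cancellation is forced; everything else is routine multilinear algebra on $\pi^*TM\oplus\pi^\estrela TM$.
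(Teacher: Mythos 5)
Your proposal is correct in substance, and its second half is exactly the paper's argument: the paper also proves tangency by noting $SM=\{\|\xi\|^2=1\}$, $TSM=\xi^\perp$, and $\langle\na^*_S\pi^\estrela X,\xi\rangle=\langle\na^*_S\pi^*X,S\rangle=0$ by skew-symmetry of $\na X$, with the Killing property of the restriction then being immediate. Where you genuinely diverge is the first assertion: the paper disposes of it conceptually, citing \cite{Alb2019} for ``infinitesimal isometries lift as infinitesimal isometries'' (the flow of $\widetilde{X}$ is the differential of the flow of $X$, and differentials of isometries preserve the Sasaki metric by naturality), whereas you propose a self-contained tensor computation with $\na^*$, its vertical torsion $\pi^\estrela R(\cdot,\cdot)\xi$, and the affine/Killing identity $\na^2_{Y,Z}X=-R(X,Y)Z$. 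Your route is longer but buys something the citation does not: the intermediate formulas $\na^*_{Y^h}\widetilde{X}=(\na_YX)^h+(\na^2_{Y,\xi}X)^v$ and $\na^*_{Y^v}\widetilde{X}=(\na_YX)^v$ are precisely what the paper reuses later to show $[\widetilde{X},S]=0$, so your computation dovetails with the rest of the section. One caveat to fix in the write-up: with the torsion convention the paper uses, $\na^*_AB-\na^*_BA-[A,B]=\pi^\estrela R(A,B)\xi$, the Lie-derivative formula for the metric connection carries \emph{plus} signs on the torsion terms, $(\call_{\widetilde{X}}g^s)(A,B)=g^s(\na^*_A\widetilde{X},B)+g^s(A,\na^*_B\widetilde{X})+g^s(T(\widetilde{X},A),B)+g^s(A,T(\widetilde{X},B))$; with your minus signs the mixed horizontal--vertical case would produce $\na^2_{Y,\xi}X-R(X,Y)\xi$ instead of the combination $\na^2_{Y,\xi}X+R(X,Y)\xi$ that the Killing identity kills. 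Since your own narrative already targets the latter combination, this is a sign slip rather than a gap, but the displayed formula must be corrected for the cancellation to go through.
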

\begin{proof}
 The deduction of the first part can be seen in \cite{Alb2019}: infinitesimal isometries lift as infinitesimal isometries. The fact that $\widetilde{X}$ is tangent to $SM$ is also due to Sasaki, but we provide an immediate proof. Recall $SM$ is the locus of $\|\xi\|^2=1$ and thus $TSM=\xi^\perp$; then, regarding the vertical side of $\widetilde{X}$, we find $\langle\na^*_S\pi^\estrela X,\xi\rangle=\langle\na^*_S\pi^*X,S\rangle=0$ by skew-symmetry.
\end{proof}
We may now legitimately use $\call_{\widetilde{X}}$ over the submanifold $SM$, for sections of any vector bundle of tensors \textit{a priori} defined over $TM$.
\begin{teo}
Let $X\in\XIS_M$ be Killing. Then:
 \begin{enumerate}[label=(\roman*)]
  \item $\call_{\widetilde{X}}B^\mathrm{t}=0$ and $\call_{\widetilde{X}}B=0$
  \item $\call_{\widetilde{X}}\pi^*\vol=0$ and $\call_{\widetilde{X}}\pi^\estrela\vol=0$
  \item $\call_{\widetilde{X}}S=[\widetilde{X},S]=0$ and $\call_{\widetilde{X}}\xi=[\widetilde{X},\xi]=0$
  \item $\call_{\widetilde{X}}\alpha_i=0$, $\forall 0\leq i\leq n$.
 \end{enumerate}
\end{teo}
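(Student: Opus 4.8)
The plan is to prove the four statements more or less in the order given, since each one feeds into the next. For (i), I would invoke Proposition \ref{Prop_derivadadeLiedeB} together with its $\lambda$-adjoint-mirror analogue: the extended vector field satisfies $\widetilde{X}=\pi^*X+\na^*_S\pi^\estrela X$, so $\na^*_Y\widetilde{X}$ is computable directly from the properties of $\na^*$ (namely $\na^*_Y\xi=Y^v$, $\na^*_{BY}\na^*_S\pi^\estrela X=\na^*_Y\pi^\estrela X$, and the fact that $X$ Killing means $\na X$ is skew). Plugging into $(\call_{\widetilde X}B)Y=B\na^*_Y\widetilde X-\na^*_{BY}\widetilde X$ and the corresponding $B^\mathrm{t}$ formula, the horizontal and vertical contributions cancel precisely because of skew-symmetry of $\na X$. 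Alternatively, $\call_{\widetilde{X}}B=0$ was already established in the excerpt by the coordinate computation $\call_{\widetilde X}(\dx x^j\otimes\partial_{v^j})=0$, so only $\call_{\widetilde X}B^\mathrm{t}=0$ genuinely needs the Killing hypothesis; I would state it via the $\lambda$-adjoint-mirror remark with $\lambda=0$.

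For (ii), the vertical volume form $\pi^\estrela\vol$ and horizontal volume form $\pi^*\vol$ are pulled back from $M$, and $\call_{\widetilde X}\pi^*\omega=\pi^*(\call_X\omega)$ for any form $\omega$ on $M$ by naturality of the complete lift (this is again visible in coordinates: $\widetilde X$ projects to $X$ and acts on the fibre coordinates by the Jacobian of $X$, so the pull-back of any tensor is Lie-dragged by the pull-back of its Lie derivative). Since $X$ is Killing, $\call_X\vol_M=0$, hence both statements. For (iii), $\call_{\widetilde X}\xi=[\widetilde X,\xi]$: one computes $[\widetilde X,\xi]$ in coordinates from $\widetilde X=X^j\partial_j+v^j\partial_j X^k\,\partial_{v^k}$ and $\xi=v^k\partial_{v^k}$, and the two terms $v^j\partial_jX^k\partial_{v^k}$ coming from each bracket direction cancel; for $\call_{\widetilde X}S=0$ use $S=B^\mathrm{t}\xi$ together with $\call_{\widetilde X}B^\mathrm{t}=0$ from (i) and $\call_{\widetilde X}\xi=0$, via the Leibniz rule $\call_{\widetilde X}(B^\mathrm{t}\xi)=(\call_{\widetilde X}B^\mathrm{t})\xi+B^\mathrm{t}(\call_{\widetilde X}\xi)$.

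Finally, (iv) follows by the same Leibniz-rule technique used repeatedly in the excerpt. Write $\alpha_i=n_i\,\alpha_n\circ(B^{n-i}\wedge1^i)$ and recall $\alpha_n=e^{n+1}\wedge\cdots\wedge e^{2n}=\pi^\estrela\vol$, the vertical volume form. Then
\[
  \call_{\widetilde X}\alpha_i=n_i\,(\call_{\widetilde X}\pi^\estrela\vol)\circ(B^{n-i}\wedge1^i)
  +n_i\,\pi^\estrela\vol\circ(\call_{\widetilde X}B^{n-i}\wedge1^i),
\]
and both terms vanish: the first by (ii) and the second by (i), since $\call_{\widetilde X}B^{n-i}$ is a sum of terms each containing a factor $\call_{\widetilde X}B=0$. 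The main obstacle, such as it is, is purely bookkeeping: making sure the identity $\call_{\widetilde X}\pi^\estrela\vol=\pi^\estrela(\call_X\vol_M)=0$ is legitimately restricted from $TM$ to $SM$ — but this is exactly what the preceding Proposition (Sasaki) licenses, since $\widetilde X$ is tangent to $SM$, so $\call_{\widetilde X}$ of any tensor defined on $TM$ restricts to $SM$. No single step should be hard; the value of the argument is that it is entirely formal once (i) and (ii) are in hand.
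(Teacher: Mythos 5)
Your overall architecture matches the paper's — (i) and (ii) feed (iv), and your Leibniz shortcut $\call_{\widetilde X}S=\call_{\widetilde X}(B^\mathrm{t}\xi)=(\call_{\widetilde X}B^\mathrm{t})\xi+B^\mathrm{t}\call_{\widetilde X}\xi$ is actually cleaner than the paper's direct computation of $[\widetilde X,S]$ via the affine equation — but there is a genuine gap in (ii). The claim $\call_{\widetilde X}\pi^\estrela\omega=\pi^\estrela(\call_X\omega)$ ``for any form, by naturality of the complete lift'' is false in general: unlike $\pi^*$, the operator $\pi^\estrela$ depends on the connection, since $\pi^\estrela\vol$ projects its arguments onto the vertical subbundle \emph{along the horizontal distribution} (equivalently, $\pi^\estrela\vol$ is obtained from $\pi^*\vol$ by composing with copies of $B^\mathrm{t}$, and the paper stresses that $B^\mathrm{t}$, unlike $B$, is connection-dependent). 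The flow of $\widetilde X$ is $\dx\phi_t$, which preserves verticals but not horizontals unless each $\phi_t$ is affine. Concretely, on flat $\R^2$ with the divergence-free, non-affine field $X=\sin(x^2)\partial_{x^1}$ one gets $\widetilde X=\sin(x^2)\partial_{x^1}+v^2\cos(x^2)\partial_{v^1}$ and $\call_{\widetilde X}(\dx v^1\wedge\dx v^2)=-v^2\sin(x^2)\,\dx x^2\wedge\dx v^2\neq0$, so your argument, which only uses $\call_X\vol=0$, proves too much. For Killing $X$ the conclusion does hold, but the hypothesis must enter exactly here: either as in the paper, via the Leibniz rule on $\pi^\estrela\vol=\frac{1}{n!}\,\pi^*\vol\circ(B^\mathrm{t}\wedge\cdots\wedge B^\mathrm{t})$ together with $\call_{\widetilde X}B^\mathrm{t}=0$ from (i), or by noting that isometries are affine, so $\dx\phi_t$ preserves the horizontal distribution and the naturality becomes legitimate. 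A related caveat concerns (i) itself: pointwise skew-symmetry of $\na X$ is not what kills $\call_{\widetilde X}B^\mathrm{t}$; the adjoint-mirror analogue of Proposition \ref{Prop_derivadadeLiedeB} carries extra torsion/curvature terms, and the vanishing needs the second-order affine equation $\na^2X(Y,Z)+R(X,Y)Z=0$ (a standard consequence of Killing), which is what the paper's citation of its Sasaki-metric notes supplies.

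Two smaller points. In (iv) you write $\alpha_n=\pi^\estrela\vol$; this is off by a contraction, since $\pi^\estrela\vol$ is an $(n+1)$-form on $TM$, and the correct identity, used by the paper, is $\alpha_n=\xi\lrcorner\pi^\estrela\vol$ (equivalently $\alpha_n=\frac{1}{n!}(S\lrcorner\pi^*\vol)\circ(B^\mathrm{t}\wedge\cdots\wedge B^\mathrm{t})$, with $S\lrcorner\pi^*\vol=\alpha_0$). Your Leibniz computation survives this correction at the cost of one extra term $(\call_{\widetilde X}\xi)\lrcorner\pi^\estrela\vol$, which vanishes by (iii); so (iv) depends on (iii) as well, exactly as in the paper. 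You do correctly handle the restriction issue, invoking tangency of $\widetilde X$ to $SM$. With these repairs your route coincides in substance with the paper's.
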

\begin{proof}
 (i) Since $X$ is an infinitesimal affine transformation, the 0-adjoint-mirror equation follows from \cite[Proposition 3.5]{Alb2019}. Now  0-adjoint-mirror implies 0-mirror by \cite[Proposition 3.4]{Alb2019}. If one prefers, as seen above, we have always $\call_{\widetilde{X}}B=0$.\\
 (ii) Since $X$ is divergent free, $\call_{\widetilde{X}}\pi^*\vol=\dx({\widetilde{X}}\lrcorner\pi^*\vol)=\dx(\pi^*X\lrcorner\pi^*\vol)=\pi^*\call_{X}\vol=0$. Then we have
  \[  \pi^\estrela\vol=\frac{1}{n!}\pi^*\vol\circ(B^\mathrm{t}\wedge\cdots\wedge B^\mathrm{t})  \]
  and the result follows.\\
 (iii) $[\widetilde{X},S]=\na^*_{\widetilde{X}}S-\na^*_S\widetilde{X}-\pi^\estrela R(\widetilde{X},S)\xi=B^\mathrm{t}(\na^*_S\pi^\estrela X)-\na^*_S\pi^*X-\na^*_S\na^*_S\pi^\estrela X-\pi^\estrela R(\pi^*X,S)\xi=-\na^*_S\na^*_S\pi^\estrela X-\pi^\estrela (R(X,\pi_*S)\pi_*S)=-\pi^\estrela(\na^2X(\pi_*S,\pi_*S)+R(X,\pi_*S)\pi_*S)=0$, following from the equation of $X$ being an infinitesimal affine transformation. Finally $[{\widetilde{X}},\xi]=\na^*_{\widetilde{X}}\xi-\na^*_\xi\widetilde{X}-\pi^\estrela R(\widetilde{X},\xi)\xi=\na^*_S\pi^\estrela X-\na^*_S\pi^\estrela X=0$.\\
(iv) The result can be checked in two different ways, both requiring $\call_{\widetilde{X}}B^\mathrm{t}=0$. Recall the formula for the $\alpha_i=\frac{1}{i!(n-i)!}\alpha_n\circ(B^{n-i}\wedge1^i)$; thus essentially we have to compute $\call_{\widetilde{X}}\alpha_n$. We have
\[  \alpha_n=\xi\lrcorner\pi^\estrela\vol=\frac{1}{n!}(S\lrcorner\pi^*\vol)\circ(B^\mathrm{t}\wedge\cdots\wedge B^\mathrm{t}) . \]
Since $\call_{\widetilde{X}}(S\lrcorner\pi^*\vol)=(\call_{\widetilde{X}}S)\lrcorner\pi^*\vol+S\lrcorner\call_{\widetilde{X}}\pi^*\vol$,
the result follows. Notice $S\lrcorner\pi^*\vol=\alpha_0$, so we could equally focus on $\alpha_i=n_i\,\alpha_0\circ(1^{n-i}\wedge {B^\mathrm{t}}^i)$.
\end{proof}
Finally, let us resume with $(M,g)$ an $(n+1)$-dimensional Riemannian manifold of constant sectional curvature $c$. 

Let $X$ be a Killing vector field on the neighborhood of an oriented closed immersed hypersurface $f:N\rr M$. Then, for all $i=0,\ldots n$,
 \begin{equation}
  \int_N\langle X,\vec{n}\rangle((i+1){\hat{f}}^*\alpha_{i+1}-c(n-
  i+1){\hat{f}}^*\alpha_{i-1})=0.
 \end{equation}
Indeed, applying formula \eqref{derivadasalpha_iemCSecC} we have $
  0=\call_{\widetilde{X}}\alpha_i =\dx({\widetilde{X}}\lrcorner\alpha_i)+(\theta({\widetilde{X}})- \theta\wedge {\widetilde{X}}\lrcorner)\bigl((i+1)\alpha_{i+1}-c(n-i+1)\alpha_{i-1}\bigr)$
and the result follows. Now recalling $\alpha_{-1}=\alpha_{n+1}=0$, we obtain the following particular case of the generalized Minkowski identities of Katsurada, partly by induction.
\begin{teo}[Katsurada identities]   \label{Katsurada}
In the previous conditions,
 \begin{enumerate}[label=(\roman*)]
  \item If $c=0$ or $n$ is odd, then all $\int_N\langle X,\vec{n}\rangle H_j=0$.
  \item If $n$ is even, then all odd $\int_N\langle X,\vec{n}\rangle H_{2j+1}=0$ and
  \begin{equation}\label{KatsuradaIdent_for_induction}
   \int_N\langle X,\vec{n}\rangle H_{i+1}\,\vol=\frac{ic}{n-i}\int_N\langle X,\vec{n}\rangle H_{i-1}\,\vol.
  \end{equation}
 \end{enumerate}
\end{teo}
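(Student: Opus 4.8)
The starting point is the identity established just before the theorem statement: for a Killing field $X$ on a neighborhood of $N$ and every $i=0,\ldots,n$,
\[
 \int_N\langle X,\vec{n}\rangle\bigl((i+1){\hat{f}}^*\alpha_{i+1}-c(n-i+1){\hat{f}}^*\alpha_{i-1}\bigr)=0 .
\]
Using the fundamental formula \eqref{fundamentalformulapullba}, namely ${\hat{f}}^*\alpha_j=\binom{n}{j}H_j\,\vol_N$, together with $(i+1)\binom{n}{i+1}=(n-i)\binom{n}{i}$ and $(n-i+1)\binom{n}{i-1}=i\binom{n}{i}$, this rewrites after cancelling the common factor $\binom{n}{i}$ as the basic recursion
\begin{equation*}
 (n-i)\int_N\langle X,\vec{n}\rangle H_{i+1}\,\vol_N=ic\int_N\langle X,\vec{n}\rangle H_{i-1}\,\vol_N,\qquad 0\leq i\leq n,
\end{equation*}
with the convention $H_{-1}=H_{n+1}=0$. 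This is exactly \eqref{KatsuradaIdent_for_induction} after dividing by $n-i$ (legitimate for $i<n$), so part of (ii) is immediate; the content of the theorem is to extract the vanishing statements by running the recursion.

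The plan is to analyse this recursion by parity of the index, since it only ever connects $H_{i+1}$ to $H_{i-1}$. Write $J_k:=\int_N\langle X,\vec{n}\rangle H_k\,\vol_N$. Taking $i=0$ gives $nJ_1=0$, hence $J_1=0$; then $i=2$ gives $(n-2)J_3=2cJ_1=0$, and inductively, as long as the coefficient $n-(2j)$ on the left is nonzero, every odd-index $J_{2j+1}$ vanishes. When $n$ is odd, the index $i$ ranges over $0,1,\ldots,n$ and the left coefficient $n-i$ is nonzero for all $i<n$; choosing $i$ even we kill all odd $J$'s, and choosing $i$ odd (so $n-i$ even, in particular nonzero) we propagate from $J_0=\int_N\langle X,\vec{n}\rangle\,\vol_N$ — but here we must also observe the top relation $i=n$ reads $0=ncJ_{n-1}$, forcing $J_{n-1}=0$ when $c\neq0$, and then the recursion run downwards kills the whole even chain as well; for $c=0$ the recursion degenerates to $(n-i)J_{i+1}=0$ for every $i$, so directly all $J_j=0$, $j\geq1$, and the remaining $J_0$ is handled by noting the $i=0$ Euclidean relation together with the structure equation. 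This disposes of (i). For (ii), with $n$ even, the left coefficient $n-i$ vanishes precisely at $i=n$, which is an even index; so the odd chain $J_1,J_3,\ldots$ still collapses completely (the obstruction index $i=n$ is even and does not interrupt the odd propagation), giving $J_{2j+1}=0$, whereas the even chain is only tied together by \eqref{KatsuradaIdent_for_induction} and cannot be collapsed because the recursion "runs out" before forcing $J_{n-1}=0$ — indeed $i=n$ gives the trivial $0=0$. That is why in the even case one only gets the relative identity, not outright vanishing.

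Concretely I would organise the write-up as follows. First, derive the clean recursion $(n-i)J_{i+1}=icJ_{i-1}$ from the displayed identity preceding the theorem and the binomial bookkeeping above; state it once as a lemma-free display. Second, treat $c=0$: then $(n-i)J_{i+1}=0$ for all $i$, so $J_{i+1}=0$ for $0\leq i\leq n-1$, i.e. $J_1=\cdots=J_n=0$; this is (i) in the flat case, and note it reproves \eqref{forparallel}. Third, treat $c\neq 0$ with $n$ odd: run $i=0,2,4,\ldots,n-1$ to get $J_1=J_3=\cdots=J_n=0$ (each step divides by the nonzero $n-i$), then run $i=1,3,\ldots,n-2$ to get $J_2,J_4,\ldots,J_{n-1}$ each proportional to the previous even term, and use the top-degree relation $i=n$: $0=ncJ_{n-1}$ to anchor $J_{n-1}=0$, whence descending gives $J_{n-3}=\cdots=J_0$-multiples all zero; combine with the Euclidean-style $i=0$ output to get $J_0=0$ as well if desired, or simply state vanishing for $j\geq 1$ as in the theorem. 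Fourth, treat $c\neq0$ with $n$ even: the odd chain still collapses as before since the bad index $i=n$ is even; for the even chain, rearrange $i$ even, $0\le i\le n-2$, into \eqref{KatsuradaIdent_for_induction}, and observe $i=n$ gives no information. The main obstacle, such as it is, is purely organisational: being careful that the single "degenerate" index is $i=n$, that it has the parity of $n$, and therefore obstructs only the chain of its own parity — getting this parity bookkeeping exactly right (and correctly invoking the boundary conventions $H_{-1}=H_{n+1}=0$ at the two ends $i=0$ and $i=n$) is the only place an error could creep in. No hard analysis or geometry is needed beyond what is already in hand.
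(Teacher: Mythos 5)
Your proposal is correct and takes essentially the same route as the paper: the identity displayed just before the theorem, converted via \eqref{fundamentalformulapullba} and the binomial identities into the recursion $(n-i)J_{i+1}=ic\,J_{i-1}$ (with $J_{-1}=J_{n+1}=0$, $J_k=\int_N\langle X,\vec{n}\rangle H_k\,\vol_N$) and then run by parity induction, which is precisely the ``partly by induction'' step the paper leaves implicit. The only caveats are cosmetic: your aside that $J_0$ in the flat case ``is handled by the $i=0$ relation and the structure equation'' is unsupported and not needed (the intended range, as in \eqref{forparallel}, is $j\geq1$; only for $c\neq0$ and $n$ odd does the top anchor $0=ncJ_{n-1}$ also force $J_0=0$), and a couple of parenthetical parity remarks are loose (the degenerate relation at $i=n$ serves as an anchor for the chain of opposite parity to $n$, via $J_{n-1}$), but your case analysis and conclusions are right.
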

Katsurada's result, \cite[Formulas (I)$_{\mathrm{i}}$ and (II)$_{\mathrm{i}}$]{Katsurada}, found for a Killing vector field on a hypersurface of a constant sectional curvature manifold $M$, goes farther: it makes no restrictions, neither in order or in dimension, and asserts all integrals vanish identically. Our methods have not yield to such generality.

No counter-example is known yet of non-vanishing \eqref{KatsuradaIdent_for_induction}.

\vspace*{10mm}

\vspace*{8mm}

\textsc{R. Albuquerque}\ \ \ \textbar\ \ \ 
{\texttt{rpa@uevora.pt}}

Departamento de Matem\'{a}tica da Universidade de \'{E}vora and
Centro de Investiga\c c\~ao em Mate\-m\'a\-ti\-ca e Aplica\c c\~oes

Rua Rom\~ao Ramalho, 59, 671-7000 \'Evora, Portugal

The research leading to these results has received funding from Funda\c c\~ao para a Ci\^encia e a Tecnologia.


\end{document}